\documentclass[11pt]{amsart}

\usepackage{geometry}
\usepackage{amssymb}
\usepackage{mathtools}
\usepackage{xcolor}
\usepackage{tikz}
\usepackage[colorlinks=true,citecolor=red,linkcolor=blue,urlcolor=red]{hyperref}

\numberwithin{equation}{section}
\newtheorem{thm}{Theorem}[section]
\newtheorem{lem}[thm]{Lemma}
\newtheorem{prop}[thm]{Proposition}

\theoremstyle{definition}
\newtheorem{defn}[thm]{Definition}
\newtheorem{ques}[thm]{Question}
\newtheorem{rem}[thm]{Remark}
\newtheorem{setup}[thm]{Set-up}

\begin{document}

\title{Embedded constant mean curvature tori in hemispheres of Berger spheres}

\author{Jihao Liu}
\address{School of Mathematical Sciences, Peking University, No. 5 Yiheyuan Road, Haidian District, Beijing 100871, China}
\address{Beijing International Center for Mathematical Research, Peking University, No. 5 Yiheyuan Road, Haidian District, Beijing 100871, China}
\email{liujihao@math.pku.edu.cn}

\author{Xiang Ma}
\address{Corresponding author. LMAM, School of Mathematical Sciences, Peking University, No. 5 Yiheyuan Road, Haidian District, Beijing 100871, China}
\email{maxiang@math.pku.edu.cn}

\subjclass[2020]{53A10, 53C30, 53C42}
\keywords{Constant mean curvature surfaces, Berger spheres, Alexandrov problem, embedded tori, rotational surfaces}
\date{\today}

\begin{abstract}
We prove that for all real numbers $\kappa,\tau$ with $\kappa>8\tau^2$ and $\tau\neq 0$, the Berger sphere $\mathbb{E}^3(\kappa,\tau)$ contains smooth closed embedded rotational surfaces of genus one with constant mean curvature which are contained in an open hemisphere. This disproves a conjecture of Fern\'andez and Mira. The main result of this paper is obtained by generative AI, particularly GPT-5.6-sol, Fable 5, and the Danus system, and then verified by the authors.
\end{abstract}

\maketitle

\tableofcontents

\section{Introduction}\label{sec:introduction}

The Alexandrov theorem is one of the fundamental results of the theory of constant mean curvature (CMC) surfaces: every compact embedded CMC surface in $\mathbb{R}^3$, in $\mathbb{H}^3$, or in an open hemisphere of the round sphere $\mathbb{S}^3$ is a round sphere \cite{Ale56}. The hemisphere hypothesis in the spherical case cannot be removed: the product tori $\mathbb{S}^1(r)\times\mathbb{S}^1(\sqrt{1-r^2})\subset\mathbb{S}^3$ are compact embedded CMC surfaces of genus one. Motivated by this statement, the problem of classifying all compact embedded CMC surfaces in a Riemannian $3$-manifold is called the \emph{Alexandrov problem} in that manifold.

Beyond the space forms, the most studied ambient spaces are the simply connected homogeneous $3$-manifolds $\mathbb{E}^3(\kappa,\tau)$ with $4$-dimensional isometry group, which fiber over the space form $\mathbb{M}^2(\kappa)$ with bundle curvature $\tau$; see \cite{Dan07,DHM09,MP12}. For $\tau=0$ these are the product spaces, where the Alexandrov problem was solved by Hsiang and Hsiang \cite{HH89} using horizontal Alexandrov reflection: every compact embedded CMC surface in $\mathbb{H}^2\times\mathbb{R}$, or in $\mathbb{S}^2_+\times\mathbb{R}$, is a rotational CMC sphere. For $\tau\neq 0$ the spaces $\mathbb{E}^3(\kappa,\tau)$ are the Berger spheres ($\kappa>0$), the Heisenberg group $\mathrm{Nil}_3$ ($\kappa=0$), and the universal cover of $\mathrm{PSL}_2(\mathbb{R})$ ($\kappa<0$). In these spaces the Hopf problem is solved: the holomorphic quadratic differential introduced by Abresch and Rosenberg for the product spaces \cite{AbRo04} and extended to all $\mathbb{E}^3(\kappa,\tau)$ in \cite{AbRo05} vanishes on every CMC sphere, and its vanishing forces rotational invariance, so every immersed CMC sphere in $\mathbb{E}^3(\kappa,\tau)$ is one of the \emph{canonical rotational CMC spheres}; see \cite[Section~4.3]{FM10}. The Alexandrov problem, however, has remained open for all three families, the essential difficulty being that the spaces $\mathbb{E}^3(\kappa,\tau)$ with $\tau\neq 0$ admit no reflections, so the Alexandrov reflection technique is unavailable; see \cite[Section~4.2]{FM10}.

\medskip

\noindent\textbf{The Alexandrov conjecture in Berger hemispheres.} In their 2010 ICM address, Fern\'andez and Mira singled out the Alexandrov problem for $\tau\neq 0$ as one of the major unsolved problems of the theory and recorded the expected answer as a conjecture: the canonical rotational CMC spheres are the only compact embedded CMC surfaces in $\mathrm{Nil}_3$, in the universal cover of $\mathrm{PSL}_2(\mathbb{R})$, and in hemispheres of Berger spheres \cite[Section~4.5]{FM10}.

The hemisphere restriction in the Berger case is forced by the Hopf tori. The Hopf lift of a circle in $\mathbb{M}^2(\kappa)$ is a flat embedded CMC torus in the Berger sphere (see \cite{Pin85} for the round case), and these tori realize every real value of the mean curvature (Proposition~\ref{prop:hopf-tori}). A Hopf torus contains complete Hopf fibers, and a complete Hopf fiber contains antipodal pairs of points of the underlying unit sphere $\mathbb{S}^3\subset\mathbb{C}^2$, so no Hopf torus is contained in an open hemisphere (Proposition~\ref{prop:hopf-tori}). The hemisphere clause therefore removes every previously known non-spherical compact embedded CMC surface in a Berger sphere, exactly as it removes the product tori in the round case.

The main result of this paper is that the Berger case of the conjecture fails, for every Berger sphere in the strongly squashed range $\kappa>8\tau^2$.

\begin{thm}[Embedded CMC tori in hemispheres]\label{thm:main-intro}
Let $\kappa,\tau$ be real numbers with $\tau\neq 0$ and $\kappa>8\tau^2$. Then there exists $H_*>0$ such that for every $H\geq H_*$, the Berger sphere $\mathbb{E}^3(\kappa,\tau)$ contains a smooth closed embedded rotational surface of genus one with constant mean curvature $H$ which is contained in an open hemisphere.

In particular, for every such $(\kappa,\tau)$ there exist compact embedded CMC surfaces contained in open hemispheres of $\mathbb{E}^3(\kappa,\tau)$ which are not spheres, and the Berger case of the conjecture of Fern\'andez and Mira fails for these parameters.
\end{thm}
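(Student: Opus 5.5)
We will construct the tori as rotational surfaces, i.e. surfaces invariant under the one-parameter group of rotations of $\mathbb{E}^3(\kappa,\tau)$ about a fixed Hopf fiber. Model the Berger sphere as $\mathbb{S}^3\subset\mathbb{C}^2$ with the squashed metric, so that the rotations act by $(z,w)\mapsto(z,e^{i\theta}w)$. The orbit space is two-dimensional and can be parametrized by $(r,t)$, where $r$ is the distance to the axis fiber measured in the base $\mathbb{M}^2(\kappa)$ and $t$ is a fiber coordinate. The CMC condition then reduces to a second-order ODE for a profile curve $\gamma(s)=(r(s),t(s))$. Write it as a first-order system in the angle $\varphi$ of $\gamma$ with respect to the orbit metric: $\varphi'=2H-(\text{geodesic curvature term of the orbits})$, with coefficients that depend on $\kappa,\tau$ and $r$ only. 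The rotational surface is a closed embedded torus exactly when $\gamma$ is a simple closed curve in the orbit space that avoids the axis $r=0$ and the boundary orbit $r=r_{\max}$, where the orbits degenerate to the dual fiber. Such a torus lies in an open hemisphere as soon as its profile is confined to a region of the form $\{r<r_0\}$ together with a fiber-coordinate window $|t|<t_0$ small enough that the corresponding set in $\mathbb{S}^3$ misses a great hemisphere.

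For large $H$ we rescale by $H$ around a chosen orbit $r=\rho$. To leading order the system becomes that of curves of constant curvature $2H$, whose solutions are small circles of radius $\sim 1/H$. The first correction, of order $1$, comes from the orbit curvature and from the bundle twisting $\tau$. Its effect is a drift of these near-circles over one loop, in the spirit of Delaunay/nodoid-type profiles. We compute the first-return (period) map of the profile ODE perturbatively in $\varepsilon=1/H$, expanding the holonomy of a loop to order $\varepsilon^2$. The closing condition for one loop then becomes two scalar equations, one for the $r$-drift and one for the $t$-drift, in the unknowns (start radius $\rho$, initial phase).

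We next identify a radius $\rho_*$ at which both drifts vanish at leading order and show the zero is nondegenerate. We expect the $r$-drift to be an explicit function of $\rho$ whose coefficients involve $\kappa$ and $\tau^2$. It should have a simple zero in the admissible range exactly when $\kappa>8\tau^2$, which explains the hypothesis: the competition is between the positive base curvature $\kappa$ and the anisotropy $\propto\tau^2$ of the squashed orbit lengths. The $t$-drift is handled by the symmetry $(t,s)\mapsto(-t,-s)$ composed with a reflection of the orbit space. That symmetry maps solutions to solutions, because in the quotient the twisting enters only through $r$. Starting perpendicularly on the symmetry line therefore forces the $t$-closing automatically, which leaves a single shooting parameter. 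The implicit function theorem then gives, for every $H\ge H_*$, a genuine closed profile curve $O(1/H)$-close to a circle of radius $\sim1/H$ centered at $(\rho_*,0)$. Such a curve is simple and embedded, stays away from $r=0$ and $r=r_{\max}$, and has tiny $t$-extent. The torus is therefore embedded, of genus one, and lies in an open hemisphere, since a thin tube around a single orbit circle, which is not a full fiber, fits in a hemisphere. Smoothness is automatic because the profile avoids the degenerate orbits.

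The main obstacle is the second-order averaging computation of the $r$-drift: we must show it has a transversal zero precisely under $\kappa>8\tau^2$, with all error terms controlled uniformly. We would carry it out first in explicit coordinates with the exact orbit-length function $\ell(r)$ of the Berger metric. The idea is to write the drift as an integral of the first-order perturbation around the unperturbed circle and reduce it to a derivative condition like $(\log\ell)''$ plus a $\tau^2$ correction vanishing at $\rho_*$. If a direct zero does not exist, the fallback is to use a one-parameter family of profiles indexed by $\rho$ and a continuity/degree argument on the drift sign between $\rho\to0$ and $\rho\to r_{\max}$.
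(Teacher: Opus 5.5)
There is a genuine gap: you have the roles of the two drifts reversed, and the symmetry step that is supposed to dispose of the fiber-direction closing is false. The fact you invoke, that the quotient data depend only on $r$, means the profile system is invariant under translations of your fiber coordinate (the argument $\alpha$ of $z$). These translations are induced by the isometries $(z,w)\mapsto(e^{i\sigma}z,w)$, which commute with the rotation. Hence there is a first integral, $\mathcal{E}=sc\sin\theta-Hs^2$ in the paper. On each level $e=-aH\in(-H,0)$ the pair $(r,\theta)$ is exactly periodic, with $t=\sin^2 r$ oscillating between the roots $t_1<a<t_2$ of $P(t)=t(1-t)-H^2(t-a)^2$. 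This is just as for Delaunay profiles, where the radial motion is periodic and the only period is along the axis.

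So your $r$-drift over one loop vanishes identically. There is no simple zero $\rho_*$ to find, and neither the implicit function theorem nor the degree fallback can extract anything from it. Conversely, by the same translation invariance, the $\alpha$-advance over one radial period, $\Delta\alpha(a)$, depends only on the level and not on the starting point, so no choice of initial point can make it vanish.

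The reflection $\alpha\mapsto-\alpha$ combined with reversal of the parameter does map solutions to solutions. But a perpendicular start on $\{\alpha=0\}$ only makes the profile symmetric about that line. The profile closes if and only if it meets $\{\alpha=0\}$ perpendicularly a second time, and a perpendicular crossing is exactly a turning point. So the remaining condition is $\alpha(t_2)=\tfrac12\Delta\alpha(a)=0$, which is not automatic: for large $H$, $H^2\Delta\alpha(a)$ converges to a function with a single zero in $(0,1)$.

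The missing idea is that the single closing equation is the vanishing of the angular holonomy
$\Delta\alpha(a)=\frac{1}{\tau}\int_{t_1}^{t_2}\frac{H(t-a)R_\tau(t)}{\sqrt{P(t)}}\,dt$.
The shooting parameter is the energy $a$, which plays the role of your center via $a\approx\sin^2\rho$. Your second-order averaging must be carried out for this quantity, not for a radial drift. Substituting $t=a+x/H$ and expanding $R_\tau$ to second order gives $H^2\Delta\alpha(a)\to\frac{\pi}{2\tau}\frac{d}{da}\sqrt{aD_\tau(a)}$, uniformly on compact subsets of $(0,1)$.

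Since $\sqrt{aD_\tau(a)}$ is the orbit length $\ell$ at $t=a$, the leading term is governed by $\ell'$, not $(\log\ell)''$. It changes sign at the maximum $a_c=\frac{1}{2(1-\tau^2)}$ of $\ell$. This point lies in $(0,1)$ exactly when $\tau^2<\tfrac12$ in the normalization $\kappa=4$, that is, when $\kappa>8\tau^2$.

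With this correction the rest of your outline goes through as in the paper:
\begin{itemize}
\item the intermediate value theorem gives $a_H$ with $\Delta\alpha(a_H)=0$;
\item the angular amplitude is $O(1/H)$, hence below $\tfrac{\pi}{2}$ for large $H$, which gives the hemisphere;
\item simplicity of the profile is best obtained not from closeness to a circle but from its structure as two graphs $\alpha=\pm\phi(t)$ over $[t_1,t_2]$, with $\phi<0$ in the interior.
\end{itemize}
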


The examples are completely explicit at the level of their defining data, and in one concrete case the closing condition admits a certificate by exact rational arithmetic.

\begin{thm}[An explicit torus]\label{thm:explicit-intro}
The Berger sphere $\mathbb{E}^3(4,\tfrac12)$, that is, the sphere $\mathbb{S}^3=\{(z,w)\in\mathbb{C}^2:|z|^2+|w|^2=1\}$ with the metric
\[
g=dr^2+\cos^2 r\,d\alpha^2+\sin^2 r\,d\beta^2-\tfrac{3}{4}\left(\cos^2 r\,d\alpha+\sin^2 r\,d\beta\right)^2
\]
in Hopf coordinates $(z,w)=(\cos r\,e^{i\alpha},\sin r\,e^{i\beta})$, contains a smooth closed embedded rotational surface of genus one with constant mean curvature $20$ which is contained in the open hemisphere $\{(z,w)\in\mathbb{S}^3:\operatorname{Re} z>0\}$.
\end{thm}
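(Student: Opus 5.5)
The statement to prove is Theorem~\ref{thm:explicit-intro}, and the plan is to reduce it to a closing problem for a planar curve. The surface will be invariant under the rotations $(z,w)\mapsto(z,e^{i\theta}w)$, which are isometries of $g$ because they only shift $\beta$. A rotational surface is determined by its profile curve in the orbit space. I would coordinatize the orbit space by $(r,\alpha)\in[0,\pi/2]\times\mathbb{R}/2\pi$; the boundary $r=0$ is the fixed circle $w=0$.

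\textbf{Step 1: the ODE.} Quotient by the $\beta$-action, giving the orbit metric
\[
\bar g=dr^2+\frac{\cos^2 r\,(1-\tfrac34\sin^2 r)}{1-\tfrac34\sin^2 r}\,d\alpha^2
\]
after completing the square in $d\beta$, together with the orbit length function $V(r,\alpha)$. I would recompute both carefully. A rotational surface has constant mean curvature $H$ exactly when its profile $\gamma(s)=(r(s),\alpha(s))$, parametrized by $\bar g$-arclength with angle $\varphi$, satisfies
\[
\varphi'=2H-\partial_\nu \log V-\text{(geodesic curvature term of }\bar g).
\]
This is the standard Hsiang--Lawson reduction. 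It yields an explicit first integral (a flux, from the conserved Killing field $\partial_\alpha$):
\[
V(r)\sin\varphi-2H A(r)=c,
\]
where $A'=V$ times the appropriate density. Using it, the profile is given by quadrature: $d\alpha/dr$ is an explicit algebraic function of $r$ on an interval $[r_1,r_2]$ between two turning radii, where $V(r)\sin\varphi=\pm1$.

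\textbf{Step 2: closing and embeddedness.} The profile is a periodic Delaunay-type curve oscillating between $r_1$ and $r_2$ while $\alpha$ advances by a period
\[
\Delta\alpha(c)=2\int_{r_1}^{r_2}\frac{d\alpha}{dr}\,dr.
\]
For the torus to be embedded and contained in $\operatorname{Re}z>0$, I want the profile to be a closed embedded curve, such as a nodoid loop, lying inside $|\alpha|<\pi/2$ and staying away from $r=\pi/2$. A simpler route is a profile closing in one period with net $\Delta\alpha=0$: a nodoid-type loop whose forward and backward $\alpha$-increments cancel. The revolved closed curve is then an embedded torus. Since $\operatorname{Re}z=\cos r\cos\alpha$, containment in the hemisphere holds once the loop stays in $|\alpha|<\pi/2$ and $r<\pi/2$.

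For $H=20$ the loop is small, of size about $1/H$, and sits near $r$ where $V$ is nearly stationary. I would pick $c$ by an intermediate value argument: exhibit two rational values $c_\pm$ with $\Delta\alpha(c_-)<0<\Delta\alpha(c_+)$, and use continuity of $\Delta\alpha$ in $c$.

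\textbf{Main obstacle.} The difficulty is certifying the signs of $\Delta\alpha(c_\pm)$ and the embeddedness of the loop. The integrands have inverse square root singularities at the turning points. I would substitute $r=r_1+(r_2-r_1)\sin^2 t$ to remove them, then bound the resulting smooth integrals by interval or Taylor estimates with exact rational arithmetic, as the paper says is possible. Embeddedness of the loop, meaning a single self-avoiding closed curve, would follow from monotonicity of $\varphi$ (the curvature sign) on each arc. That monotonicity should be checked from the ODE with explicit bounds at $H=20$.
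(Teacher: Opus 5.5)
You have the paper's architecture: a profile ODE with a first integral, closure after one radial period by forcing zero net angular advance, the intermediate value theorem between two certified parameters after removing the endpoint singularities, and containment from $\operatorname{Re}z=\cos r\cos\alpha$. Your heuristic is also right: the leading term of $H^2\Delta\alpha$ is a positive multiple of the derivative of the orbit length $\sqrt{t(1-\frac34t)}$, $t=\sin^2r$, which vanishes at $t=\frac23$.

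\textbf{The quotient metric.} The one formula you write down would sink the certification. Your $\bar g_{\alpha\alpha}$ simplifies to $\cos^2 r$, the round-sphere value. The correct quotient metric is $dr^2+\frac{\cos^2 r}{4(1-\frac34\sin^2 r)}\,d\alpha^2$. This is not cosmetic. The conserved quantity is $\sin r\cos r\sin\varphi-H\sin^2 r$, so the relation between $r$ and the tangent angle $\varphi$ along a profile does not involve $\tau$ at all. The Berger parameter enters only through $d\alpha/dr=\tan\varphi/\sqrt{\bar g_{\alpha\alpha}}$. Inserting your coefficient there reproduces exactly the round-sphere holonomy. For that holonomy a zero with amplitude below $\frac{\pi}{2}$ cannot exist: it would give an embedded CMC torus in a round hemisphere, contradicting Alexandrov. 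Relatedly, the flux term in your first integral should be the orbit length times $|\partial_\alpha|_{\bar g}\sin\varphi$, which is proportional to $\tau\sin r\cos r\sin\varphi$, not the orbit length times $\sin\varphi$. The turning radii are where $\sin\varphi=\pm1$.

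\textbf{Embeddedness.} Your embeddedness mechanism is not the operative one, and no estimate at $H=20$ is needed. The planar theorem that a closed curve with monotone tangent angle is simple concerns Euclidean angles, and $\bar g$ is not flat. Write the conserved value as $-aH$. Then the first integral gives $\sin\varphi=H(t-a)/\sqrt{t(1-t)}$, so on each radial branch $\alpha$ is unimodal in $t$ with its extremum at $t=a$. Also, $d\alpha/du$ depends only on $t$ while $dt/du$ changes sign between branches, so the descending branch is the mirror image $\alpha\mapsto-\alpha$ of the ascending one. Now anchor the lower turning point at $\alpha=0$. The condition $\Delta\alpha=0$ then already gives $\alpha=0$ at the upper turning point. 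The two branches lie strictly on opposite sides of $\alpha=0$ over the open $t$-interval, so the profile is a lens meeting itself only at its two vertices; this is Proposition~\ref{prop:lens}.

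\textbf{The amplitude bound.} Containment in $\{\operatorname{Re}z>0\}$ needs an explicit bound $A<\frac{\pi}{2}$ on the angular amplitude. Since $a_*$ is only known to lie in your bracketing interval, that bound must be uniform there. After the same singularity-removing substitution, it suffices to multiply $|t-a|\le t_2-t_1=O(1/H)$ by a uniform bound on the remaining factor over the root band.
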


\medskip

\noindent\textbf{Strategy.} A surface in a Berger sphere which is invariant under the rotation $(z,w)\mapsto(z,e^{i\theta}w)$ is governed by a profile curve in the orbit space, and the CMC condition is equivalent to the profile being a magnetic geodesic of an explicit metric on the orbit space (Proposition~\ref{prop:profile-system}). The rotational symmetry produces a conserved quantity $e$. For $H>0$, we choose the energy regime $e\in(-H,0)$ and write $e=-aH$ with $a\in(0,1)$; then the radial coordinate $t=\sin^2 r$ oscillates between the two roots $t_1<a<t_2$ of the turning polynomial
\[
P(t)=t(1-t)-H^2(t-a)^2.
\]
The angular coordinate $\alpha$ advances by an explicit holonomy integral $\Delta\alpha(a)$ during each radial oscillation. The profile closes after a single oscillation, with zero winding around the rotation axis, exactly when $\Delta\alpha(a)=0$; if moreover its angular amplitude $A$ is smaller than $\pi/2$, the closed profile is a small lens whose angular coordinate stays within $A$ of the meridian $\alpha=0$, and its rotational surface is an embedded torus (Proposition~\ref{prop:lens}).

The heart of the construction is a sign change of the holonomy. We prove the uniform limit
\[
\lim_{H\to\infty}H^2\,\Delta\alpha(a)=\frac{\pi}{2\tau}\,\frac{d}{da}\sqrt{a\,(1-a+\tau^2 a)}
\]
in the normalized family $\kappa=4$, $\tau\in(0,\tfrac{1}{\sqrt2})$ (Proposition~\ref{prop:asymptotic}). The right-hand side changes sign at $a_c=\frac{1}{2(1-\tau^2)}$, and $a_c<1$ exactly when $\tau^2<\frac12$, which after undoing the normalization is the hypothesis $\kappa>8\tau^2$ of Theorem~\ref{thm:main-intro}. The intermediate value theorem then produces zero-holonomy parameters $a_H$ for all large $H$, the angular amplitude decays like $1/H$, and the resulting lens lifts to an embedded CMC torus inside the open hemisphere $\{\operatorname{Re}z>0\}$. Theorem~\ref{thm:explicit-intro} is obtained by certifying one such sign change at $\tau=\frac12$, $H=20$ by exact rational estimates.

\medskip

\noindent\textbf{Relation to the classifications of rotational CMC surfaces.} Rotationally invariant CMC surfaces in the Berger spheres and in the special linear group were classified by Torralbo \cite{Tor10}, and invariant CMC surfaces in the spaces $\mathbb{E}^3(\kappa,\tau)$ and in related spaces have since been studied extensively; see \cite{Tor12,TU12,Bue21,Bue22,KT24} and the surveys \cite{DHM09,MP12}. The compact embedded examples exhibited in \cite{Tor10} in Berger spheres close after winding around the rotation axis, the closing condition being that the angular period is of the form $2\pi/k$ with $k$ a positive integer; the tori of Theorems~\ref{thm:main-intro} and~\ref{thm:explicit-intro} instead have angular period equal to zero and do not wind around the axis. Along the tori constructed here the angular velocity changes sign, so they are of nodoid type in the terminology of \cite{Tor10,Bue22}. In the normalization $\kappa=4$, the period $T(H,E)$ of \cite[Section~3]{Tor10} agrees with our $\Delta\alpha$, with $E=e$. Although $T=0$ is formally allowed by the compactness condition that $T$ be a rational multiple of $\pi$, the zero-period embedded subcase was not isolated in the previous analyses: the nodoid discussion in \cite{Tor10} says only that the resulting surface is immersed, while item~(4) of \cite[Theorem~1.2]{Bue22} describes rotational nodoids of prescribed mean curvature in Berger spheres as properly immersed with self-intersections. Theorem~\ref{thm:explicit-intro} proves that at zero angular period the profile can instead be a simple embedded lens; we discuss this in Remark~\ref{rem:nodoid-correction}.

We also note that for \emph{non-constant} rotationally symmetric prescribed mean curvature, Bueno constructed embedded rotational tori in $\mathrm{Nil}_3$ and in the universal cover of $\mathrm{PSL}_2(\mathbb{R})$, so the prescribed-curvature analogue of Alexandrov uniqueness was already known to fail in those spaces \cite{Bue20,Bue21}. The constant mean curvature statement, which is the case recorded in the conjecture of \cite{FM10}, is the one settled (negatively, for Berger hemispheres) in this paper.

\medskip

\noindent\textbf{Scope.} Theorem~\ref{thm:main-intro} covers the entire parameter range $\kappa>8\tau^2$, and the construction produces embedded tori for every sufficiently large mean curvature in each such Berger sphere. The tools developed here extend further. The proof of the limit formula in Proposition~\ref{prop:asymptotic} extends verbatim to the whole squashed range $\tau\in(0,1)$, and it would be interesting to determine the exact set of parameters $(\kappa,\tau,H)$ for which zero-holonomy embedded lenses exist, in particular whether they reach the range $4\tau^2<\kappa\leq 8\tau^2$, the stretched Berger spheres $\kappa<4\tau^2$, or small values of $H$. In $\mathrm{Nil}_3$ and in the universal cover of $\mathrm{PSL}_2(\mathbb{R})$ there exist no compact rotational CMC tori, since rotational CMC tori in $\mathbb{E}^3(\kappa,\tau)$ exist only when $\kappa-4\tau^2>0$ (see \cite[Section~4.1]{FM10}), so the corresponding cases of the conjecture of \cite{FM10} would have to be attacked with non-rotational competitors. We record the natural remaining question for Berger spheres.

\begin{ques}\label{ques:all-berger}
Does every Berger sphere $\mathbb{E}^3(\kappa,\tau)$ with $\kappa\neq 4\tau^2$ contain a compact embedded CMC surface, other than a canonical rotational CMC sphere, which is contained in an open hemisphere?
\end{ques}

\begin{rem}\label{rem:ai}
The main result of this paper was obtained using generative AI, particularly GPT-5.6-sol, Fable 5, and the Danus system. Danus is a specialized agent built on the Rethlas system and substantially more capable of conducting fundamental mathematical research. Human verification and polishing were done afterwards. See \cite{Liu+26} and \cite{Ju+26} for detailed introductions to the Danus system and the Rethlas system, respectively. Due to the limitation of generative AI, it is possible that we have missed some related references in the literature, and we welcome any comments from experts.
\end{rem}

\subsection*{Acknowledgements}
The work was partially supported by the National Key R\&D Program of China \#\allowbreak 2024YFA1014400.
The first author would like to thank other members of the Danus team (namely Guoxiong Gao, Zeming Sun, Bin Wu, Shurui Liu, Jiedong Jiang, Haocheng Ju, Leheng Chen, Ronnie Cheng, Xiping Zhang, and Bin Dong) and the Rethlas team (namely Haocheng Ju, Jiedong Jiang, Shurui Liu, Guoxiong Gao, Yuefeng Wang, Zeming Sun, Bin Wu, Liang Xiao, and Bin Dong) for their contributions to the development of Danus and Rethlas.
The first author would like to thank Ruochuan Liu and Gang Tian for constant support and encouragement. The second author is partially supported by NSFC grant 11831005.

\section{Preliminaries}\label{sec:preliminaries}

In this section, we fix the model of the Berger spheres, prove the homothety normalization, and establish two hemisphere containment criteria together with the basic properties of the Hopf tori.

\begin{setup}\label{setup:berger}
Let $\mathbb{S}^3=\{(z,w)\in\mathbb{C}^2:|z|^2+|w|^2=1\}$, viewed as the unit sphere of the Euclidean space $\mathbb{R}^4=\mathbb{C}^2$. We use Hopf coordinates
\[
(z,w)=(\cos r\,e^{i\alpha},\sin r\,e^{i\beta}),\qquad r\in[0,\tfrac{\pi}{2}],\ \alpha,\beta\in\mathbb{R}/2\pi\mathbb{Z},
\]
and we write $s=\sin r$, $c=\cos r$, and $t=s^2$ throughout. For real numbers $\kappa>0$ and $\tau\neq 0$, the \emph{Berger sphere} $\mathbb{E}^3(\kappa,\tau)$ is $\mathbb{S}^3$ equipped with the metric
\begin{equation}\label{eq:berger-metric}
g_{\kappa,\tau}=\frac{4}{\kappa}\left[dr^2+c^2\,d\alpha^2+s^2\,d\beta^2+\Big(\frac{4\tau^2}{\kappa}-1\Big)\big(c^2\,d\alpha+s^2\,d\beta\big)^2\right].
\end{equation}
The metric $g_{\kappa,\tau}$ is round if and only if $\kappa=4\tau^2$. For $\tau>0$ we write
\begin{equation}\label{eq:normalized-metric}
g_\tau:=g_{4,\tau}=dr^2+c^2\,d\alpha^2+s^2\,d\beta^2+(\tau^2-1)\big(c^2\,d\alpha+s^2\,d\beta\big)^2
\end{equation}
for the normalized metric, and we set
\[
D=D_\tau(t)=c^2+\tau^2 s^2=1-t+\tau^2 t>0 .
\]
For an oriented surface with unit normal $N$, the second fundamental form and the mean curvature are taken with the conventions
\[
\mathrm{II}(X,Y)=g(\nabla_XY,N),\qquad H=\tfrac12\operatorname{trace}(\mathrm{II}).
\]
\end{setup}

The metric \eqref{eq:berger-metric} depends on $\tau$ only through $\tau^2$, so we lose nothing by assuming $\tau>0$. The following homothety lemma reduces all our statements to the normalized family \eqref{eq:normalized-metric}.

\begin{lem}[Homothety normalization]\label{lem:scaling}
Let $\kappa>0$, $\tau\neq 0$, and $\lambda>0$. Then
\[
g_{\lambda^2\kappa,\lambda\tau}=\lambda^{-2}g_{\kappa,\tau}.
\]
Moreover, if an oriented immersed surface in $\mathbb{S}^3$ has mean curvature $H$ with respect to $g_{\kappa,\tau}$ and unit normal $N$, then the same immersion has mean curvature $\lambda H$ with respect to $g_{\lambda^2\kappa,\lambda\tau}$ and unit normal $\lambda N$. In particular, taking $\lambda=2/\sqrt{\kappa}$ identifies $\big(\mathbb{E}^3(\kappa,\tau),H\big)$ with $\big((\mathbb{S}^3,g_{\tau'}),\tfrac{2}{\sqrt{\kappa}}H\big)$, where $\tau'=2|\tau|/\sqrt{\kappa}$, and
\[
0<\tau'<\tfrac{1}{\sqrt{2}}
\quad\Longleftrightarrow\quad
\kappa>8\tau^2 .
\]
\end{lem}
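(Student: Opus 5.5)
The plan is to verify the metric identity by direct substitution into \eqref{eq:berger-metric}, and then read off the mean curvature statement from the standard behaviour of the second fundamental form under a constant rescaling of the metric.

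\emph{Metric identity.} Substituting $\lambda^2\kappa$ and $\lambda\tau$ into \eqref{eq:berger-metric}, the prefactor becomes $\frac{4}{\lambda^2\kappa}=\lambda^{-2}\cdot\frac{4}{\kappa}$. The coefficient of the fiber term is $\frac{4\lambda^2\tau^2}{\lambda^2\kappa}-1=\frac{4\tau^2}{\kappa}-1$, which is unchanged. Together these give $g_{\lambda^2\kappa,\lambda\tau}=\lambda^{-2}g_{\kappa,\tau}$.

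\emph{Mean curvature.} Write $\tilde g=\mu g$ with the constant $\mu=\lambda^{-2}$. The Koszul formula shows that the Levi-Civita connections coincide, $\tilde\nabla=\nabla$. A $g$-unit normal $N$ gives the $\tilde g$-unit normal $\tilde N=\mu^{-1/2}N=\lambda N$. The second fundamental forms are related by
\[
\widetilde{\mathrm{II}}(X,Y)=\tilde g(\nabla_XY,\tilde N)=\mu\,\mu^{-1/2}\,g(\nabla_XY,N)=\mu^{1/2}\,\mathrm{II}(X,Y).
\]
The trace with respect to the induced metric $\mu\,g|_{\Sigma}$ carries an extra factor $\mu^{-1}$. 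Hence
\[
\tilde H=\mu^{-1/2}H=\lambda H,
\]
with orientation given by $\lambda N$.

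\emph{Specialization.} Take $\lambda=2/\sqrt{\kappa}$. Then $\lambda^2\kappa=4$ and $\lambda\tau=2\tau/\sqrt{\kappa}$. Since the metric depends only on $\tau^2$, we may replace $\tau$ by $|\tau|$, so $g_{\lambda^2\kappa,\lambda\tau}=g_{\tau'}$ with $\tau'=2|\tau|/\sqrt\kappa>0$, and the mean curvature becomes $\frac{2}{\sqrt\kappa}H$.

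\emph{Parameter range.} We have $\tau'<1/\sqrt2$ if and only if $4\tau^2/\kappa<1/2$, which is equivalent to $\kappa>8\tau^2$ because $\kappa>0$.

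No step is a genuine obstacle. The only point needing care is the bookkeeping of powers of $\lambda$ in the second fundamental form and the trace, together with the sign and orientation convention.
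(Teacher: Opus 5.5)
Your proposal is correct and follows essentially the same route as the paper. Both arguments substitute into \eqref{eq:berger-metric} and note that the constant rescaling leaves the Levi-Civita connection unchanged. Your bookkeeping via $\mu=\lambda^{-2}$ (giving $\widetilde{\mathrm{II}}=\lambda^{-1}\mathrm{II}$ and a factor $\lambda^{2}$ from the trace) matches the paper's use of the orthonormal frame $(\lambda E_1,\lambda E_2)$, as does the reduction of $\tau'<\tfrac{1}{\sqrt2}$ to $\kappa>8\tau^2$.
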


\begin{proof}
Substituting $(\lambda^2\kappa,\lambda\tau)$ into \eqref{eq:berger-metric} leaves the anisotropy coefficient unchanged, since $4(\lambda\tau)^2/(\lambda^2\kappa)=4\tau^2/\kappa$, and multiplies the prefactor by $\lambda^{-2}$. This proves the first identity.

Write $g=g_{\kappa,\tau}$ and $g'=\lambda^{-2}g$. Since $\lambda^{-2}$ is constant, the Levi-Civita connections of $g$ and $g'$ coincide. If $N$ is a $g$-unit normal of the surface, then $\lambda N$ is a $g'$-unit normal, and for tangent vectors $X,Y$,
\[
\mathrm{II}'(X,Y)=g'(\nabla_XY,\lambda N)=\lambda^{-1}\,\mathrm{II}(X,Y).
\]
If $(E_1,E_2)$ is a $g$-orthonormal tangent frame, then $(\lambda E_1,\lambda E_2)$ is a $g'$-orthonormal tangent frame, and
\[
H'=\tfrac12\sum_{j=1}^2 \mathrm{II}'(\lambda E_j,\lambda E_j)=\tfrac12\sum_{j=1}^2\lambda\,\mathrm{II}(E_j,E_j)=\lambda H .
\]
For $\lambda=2/\sqrt{\kappa}$ we obtain $\lambda^2\kappa=4$ and $\lambda|\tau|=2|\tau|/\sqrt{\kappa}=\tau'$. Finally, $(\tau')^2<\tfrac12$ means $4\tau^2/\kappa<\tfrac12$, which is equivalent to $\kappa>8\tau^2$ because $\kappa>0$.
\end{proof}

The phrase ``Berger hemispheres'' in \cite[Section~4.5]{FM10} is not accompanied there by a separate definition. Throughout this paper, we use the interpretation inherited from the underlying standard sphere, as follows.

\begin{defn}[Hemispheres]\label{defn:hemisphere}
For a Euclidean unit vector $v\in\mathbb{R}^4=\mathbb{C}^2$, the \emph{open hemisphere} with pole $v$ is
\[
\mathcal{H}_v=\{x\in\mathbb{S}^3: x\cdot v>0\},
\]
where $\cdot$ is the Euclidean inner product of $\mathbb{R}^4$. Equivalently, $\mathcal{H}_v$ is one of the two components of the complement of the great sphere $\mathbb{S}^3\cap v^{\perp}$. This notion depends only on the underlying sphere $\mathbb{S}^3$ and not on the choice of Berger metric on it.
\end{defn}

\begin{lem}[Containment criterion]\label{lem:hemisphere-criterion}
Let $\Sigma\subset\mathbb{S}^3$ be a subset such that every point of $\Sigma$ can be written as $(z,w)=(\sqrt{1-t}\,e^{i\alpha},\sqrt{t}\,e^{i\beta})$ with $t\leq t_+<1$ and with $\alpha$ admitting a real lift satisfying $|\alpha-\alpha_0|\leq\delta$, for some fixed $\alpha_0\in\mathbb{R}$ and $0\leq\delta<\tfrac{\pi}{2}$. Then $\Sigma\subset\mathcal{H}_v$ for $v=(e^{i\alpha_0},0)$, and every $x\in\Sigma$ satisfies
\[
x\cdot v\ \geq\ \sqrt{1-t_+}\,\cos\delta\ >\ 0 .
\]
\end{lem}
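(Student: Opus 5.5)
The plan is to compute the Euclidean inner product $x\cdot v$ directly in Hopf coordinates and then bound each factor from below.

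First, fix $x=(z,w)\in\Sigma$ and write it, as the hypothesis allows, in the form $z=\sqrt{1-t}\,e^{i\alpha}$, $w=\sqrt{t}\,e^{i\beta}$, with $t\leq t_+$ and a real lift of $\alpha$ satisfying $|\alpha-\alpha_0|\leq\delta$.

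Next, identify $\mathbb{C}^2$ with $\mathbb{R}^4$. For $v=(e^{i\alpha_0},0)$ the Euclidean inner product is $x\cdot v=\operatorname{Re}\big(z\,\overline{e^{i\alpha_0}}\big)+\operatorname{Re}(w\cdot\bar 0)$. This gives
\[
x\cdot v=\sqrt{1-t}\,\cos(\alpha-\alpha_0).
\]
The value does not depend on which lift of $\alpha$ is used, since cosine is $2\pi$-periodic. We may therefore use the lift with $|\alpha-\alpha_0|\leq\delta$.

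Now bound each factor separately:
\begin{itemize}
\item Since $t\leq t_+<1$, we have $\sqrt{1-t}\geq\sqrt{1-t_+}>0$.
\item Since $0\leq|\alpha-\alpha_0|\leq\delta<\tfrac{\pi}{2}$, and cosine is even and decreasing on $[0,\tfrac{\pi}{2}]$, we have $\cos(\alpha-\alpha_0)\geq\cos\delta>0$.
\end{itemize}
Both factors are positive, so multiplying the two lower bounds gives $x\cdot v\geq\sqrt{1-t_+}\,\cos\delta>0$. By Definition~\ref{defn:hemisphere}, this means $x\in\mathcal{H}_v$.

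Finally, $v$ is indeed a Euclidean unit vector because $|e^{i\alpha_0}|=1$, so $\mathcal{H}_v$ is a genuine open hemisphere.

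I see no real obstacle here. The only point needing care is the independence of $\cos(\alpha-\alpha_0)$ from the choice of lift of $\alpha$, which is what lets the hypothesis on a single lift control the bound.
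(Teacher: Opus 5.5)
Your proposal is correct and matches the paper's proof: both compute $x\cdot v=\operatorname{Re}\big(\sqrt{1-t}\,e^{i(\alpha-\alpha_0)}\big)=\sqrt{1-t}\,\cos(\alpha-\alpha_0)$ and bound the two factors below by $\sqrt{1-t_+}$ and $\cos\delta>0$. Your extra remarks, that the value does not depend on the lift of $\alpha$ and that $v$ is a unit vector, are harmless additions the paper leaves implicit.
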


\begin{proof}
For $x=(\sqrt{1-t}\,e^{i\alpha},\sqrt{t}\,e^{i\beta})$ we compute
\[
x\cdot v=\operatorname{Re}\big(\sqrt{1-t}\,e^{i(\alpha-\alpha_0)}\big)=\sqrt{1-t}\,\cos(\alpha-\alpha_0).
\]
The hypotheses give $\sqrt{1-t}\geq\sqrt{1-t_+}$ and $\cos(\alpha-\alpha_0)\geq\cos\delta>0$.
\end{proof}

\begin{lem}[Invariant subsets and hemispheres]\label{lem:invariant-criterion}
Let $\Phi_\theta(z,w)=(z,e^{i\theta}w)$, and let $\Sigma\subset\mathbb{S}^3$ be a nonempty compact $\Phi_\theta$-invariant subset. Put $Z(\Sigma)=\{z\in\mathbb{C}:(z,w)\in\Sigma\text{ for some }w\}$. Then $\Sigma$ is contained in some open hemisphere if and only if there exists a nonzero $a\in\mathbb{C}$ with $\operatorname{Re}(z\bar a)>0$ for every $z\in Z(\Sigma)$. Consequently, no subset of $\mathbb{S}^3$ containing a complete Hopf fiber $\{(e^{i\theta}z_0,e^{i\theta}w_0):\theta\in\mathbb{R}\}$ is contained in any open hemisphere.
\end{lem}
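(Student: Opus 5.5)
The plan is to prove both implications by averaging the linear functional $x\mapsto x\cdot v$ over the circle action $\Phi_\theta$, and then to read off the statement about Hopf fibers.

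Write a pole as $v=(a,b)\in\mathbb{C}^2$, so that for $x=(z,w)$ we have $x\cdot v=\operatorname{Re}(z\bar a)+\operatorname{Re}(w\bar b)$. There are two directions.

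\emph{Sufficiency.} Suppose $a\neq 0$ satisfies $\operatorname{Re}(z\bar a)>0$ on $Z(\Sigma)$. Take $v=(a/|a|,0)$. Then $x\cdot v=\operatorname{Re}(z\bar a)/|a|>0$ for every $x=(z,w)\in\Sigma$, so $\Sigma\subset\mathcal{H}_v$. This direction uses neither invariance nor compactness.

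\emph{Necessity.} Suppose $\Sigma\subset\mathcal{H}_v$ with $v=(a,b)$. For $(z,w)\in\Sigma$, invariance gives $(z,e^{i\theta}w)\in\Sigma$ for every $\theta$, hence
\[
\operatorname{Re}(z\bar a)+\operatorname{Re}(e^{i\theta}w\bar b)>0\quad\text{for all }\theta.
\]
Averaging over $\theta\in[0,2\pi]$ kills the second term. To keep the inequality strict, I will instead choose $\theta$ with $\operatorname{Re}(e^{i\theta}w\bar b)=-|w\bar b|\le 0$, which gives $\operatorname{Re}(z\bar a)>|wb|\ge 0$. It remains to see that $a\neq0$. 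If $a=0$, the last inequality would read $0>|wb|$, which is impossible; since $\Sigma$ is nonempty, this contradiction forces $a\neq 0$. Compactness is not even needed here. The only point requiring care is keeping the strict inequality, and the minimizing choice of $\theta$ handles it.

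\emph{Hopf fibers.} Let $S\supset F=\{(e^{i\theta}z_0,e^{i\theta}w_0)\}$ lie in some $\mathcal{H}_v$ with $v=(a,b)$. Note that $S$ need not be $\Phi$-invariant, so the criterion above does not apply directly; instead I argue directly. For $\theta$ and $\theta+\pi$ the two fiber points are antipodal, $x$ and $-x$. Then $x\cdot v>0$ and $-x\cdot v>0$ cannot both hold. This is a contradiction, so no such $S$ lies in an open hemisphere.
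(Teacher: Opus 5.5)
Your proof is correct and follows the paper's argument: the same pole $(a/|a|,0)$ for sufficiency, the same use of $\Phi_\theta$-invariance to eliminate the $w$-term for necessity, and the same antipodal-pair argument for the Hopf fibers. The only difference is that the paper integrates the strict inequality over $\theta\in[0,2\pi]$, which does preserve strictness since the integrand is continuous and everywhere positive. You instead evaluate at the minimizing $\theta$, which works equally well and even gives the sharper bound $\operatorname{Re}(z\bar a)>|w|\,|b|$.
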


\begin{proof}
Assume first that such an $a$ exists, and put $v=(a,0)/|a|$. For every $(z,w)\in\Sigma$ we have $(z,w)\cdot v=\operatorname{Re}(z\bar a)/|a|>0$, so $\Sigma\subset\mathcal{H}_v$.

Conversely, assume $\Sigma\subset\mathcal{H}_v$ for $v=(a,b)$ with $|v|=1$. Fix $(z,w)\in\Sigma$. By $\Phi_\theta$-invariance, $\operatorname{Re}(z\bar a)+\operatorname{Re}(e^{i\theta}w\bar b)>0$ for every $\theta\in\mathbb{R}$. Integrating this strict inequality of continuous functions over $\theta\in[0,2\pi]$ kills the second term and yields $2\pi\operatorname{Re}(z\bar a)>0$. Thus $\operatorname{Re}(z\bar a)>0$ for every $z\in Z(\Sigma)$, and in particular $a\neq 0$.

For the final assertion, a complete Hopf fiber contains the antipodal pair $\pm(z_0,w_0)$, attained at $\theta=0$ and $\theta=\pi$. If both lay in $\mathcal{H}_v$, then $x\cdot v>0$ and $-x\cdot v>0$ for $x=(z_0,w_0)$, which is impossible.
\end{proof}

We next record the Hopf tori, which realize every value of the mean curvature and explain the necessity of the hemisphere hypothesis in the conjecture of \cite{FM10}.

\begin{prop}[Hopf tori]\label{prop:hopf-tori}
Let $\kappa>0$, $\tau\neq 0$, and $r_0\in(0,\tfrac{\pi}{2})$, and let
\[
T_{r_0}=\{(z,w)\in\mathbb{S}^3:|z|=\cos r_0\}.
\]
Then $T_{r_0}$ is a closed embedded flat torus in $\mathbb{E}^3(\kappa,\tau)$ with constant mean curvature
\[
H=\frac{\sqrt{\kappa}}{2}\cot(2r_0)
\]
with respect to the unit normal $N=-\frac{\sqrt{\kappa}}{2}\,\partial_r$. In particular, every real number is the mean curvature of some Hopf torus. Moreover, $T_{r_0}$ contains complete Hopf fibers, and hence is contained in no open hemisphere.
\end{prop}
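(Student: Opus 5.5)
By Lemma~\ref{lem:scaling}, it suffices to work with the normalized metric $g_\tau$ of \eqref{eq:normalized-metric}. The mean curvature then rescales by the factor $\sqrt{\kappa}/2$, and the unit normal rescales in the same way. This is consistent with the claimed formula $H=\tfrac{\sqrt\kappa}{2}\cot(2r_0)$ and normal $N=-\tfrac{\sqrt\kappa}{2}\partial_r$, once we show that in $g_\tau$ the torus has mean curvature $\cot(2r_0)$ with respect to $-\partial_r$. Note that $g_\tau(\partial_r,\partial_r)=1$, and $\partial_r$ is $g_\tau$-orthogonal to $\partial_\alpha$ and $\partial_\beta$, because the metric has no cross terms with $dr$. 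So $-\partial_r$ is a unit normal to $T_{r_0}=\{r=r_0\}$.

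\textbf{Intrinsic geometry.} Parametrize $T_{r_0}$ by $(\alpha,\beta)\in(\mathbb{R}/2\pi\mathbb{Z})^2$. The induced metric is
\[
c^2d\alpha^2+s^2d\beta^2+(\tau^2-1)(c^2d\alpha+s^2d\beta)^2
\]
with constant coefficients, since $r=r_0$ is fixed. It is therefore flat. Its Gram determinant is $c^2s^2D>0$, so the metric is positive definite. The map is an embedding because $(\alpha,\beta)\mapsto(\cos r_0e^{i\alpha},\sin r_0e^{i\beta})$ is injective for $0<r_0<\pi/2$.

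\textbf{Second fundamental form.} The fields $\partial_\alpha,\partial_\beta$ are Killing fields with constant coefficients along the torus, so the second fundamental form comes from the first variation of the metric in the normal direction. Since $\partial_r$ is a unit geodesic field orthogonal to the level sets, we have $\mathrm{II}(X,Y)=g(\nabla_XY,-\partial_r)=\tfrac12\partial_r g(X,Y)$ for coordinate fields $X,Y\in\{\partial_\alpha,\partial_\beta\}$. This follows from Koszul's formula: $g(\nabla_X Y,\partial_r)=-\tfrac12\partial_r g(X,Y)$, because all brackets vanish and the mixed terms $g(\cdot,\partial_r)$ are zero. Hence $H=\tfrac14\,\partial_r\log\det(g|_{T})$. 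In the normalized metric this gives
\[
H=\tfrac14\,\partial_r\log\!\big(c^2s^2D\big)=\tfrac14\big(-2\tan r+2\cot r\big)+\tfrac14\,\partial_r\log D .
\]
The first term is $\tfrac12(\cot r-\tan r)=\cot 2r$.

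\textbf{Main obstacle.} The step I expect to be delicate is the $D$ term. The determinant carries the factor $D=c^2+\tau^2s^2$, which is not constant in $r$, so the naive formula $\tfrac14\partial_r\log\det$ seems to produce an extra term $\tfrac14\partial_r\log D$ that would contradict the claimed value. I would recheck the determinant directly. Writing $u=c^2d\alpha+s^2d\beta$, the metric is $c^2d\alpha^2+s^2d\beta^2+(\tau^2-1)u^2$, and its determinant is
\[
c^2s^2\big(1+(\tau^2-1)(c^2+s^2)\big)=\tau^2c^2s^2 .
\]
The factor $D$ cancels, because $c^2+s^2=1$: the quadratic form $u^2$ has $u$-weight $c^2+s^2$ relative to $c^2d\alpha^2+s^2d\beta^2$. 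Hence $\det=\tau^2c^2s^2$, and $H=\tfrac14\partial_r\log(c^2s^2)=\cot 2r_0$, as claimed.

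\textbf{Remaining claims.} Since $\cot(2r_0)$ ranges over all of $\mathbb{R}$ as $r_0$ ranges over $(0,\pi/2)$, every real number is the mean curvature of some Hopf torus. The torus is invariant under $(z,w)\mapsto(e^{i\theta}z,e^{i\theta}w)$, so it contains complete Hopf fibers. By the last assertion of Lemma~\ref{lem:invariant-criterion}, it lies in no open hemisphere.
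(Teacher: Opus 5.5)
Your proof is correct and is essentially the paper's argument. In both, the Koszul formula gives $\mathrm{II}=\tfrac12\partial_r g$ on angular coordinate fields, so $H$ is a constant multiple of $\partial_r\log\det$ of the angular block. That determinant depends on $r$ only through $c^2s^2$, and the hemisphere claim then follows from the Hopf fibers and Lemma~\ref{lem:invariant-criterion}. The only difference is that you first normalize to $\kappa=4$ via Lemma~\ref{lem:scaling}, whereas the paper computes directly with $g_{\kappa,\tau}$.

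Two small fixes. In your intrinsic-geometry paragraph, replace the erroneous value $c^2s^2D$ by $\tau^2c^2s^2$, as you yourself find later. Also, justify positive definiteness and the immersion property by $g_\tau$ being Riemannian and the parametrization being an immersion, since the sign of the determinant alone does not give positive definiteness.
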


\begin{proof}
Let $M(r)$ denote the angular coefficient matrix of $g_{\kappa,\tau}$ on a level set of $r$. In the frame $(\partial_\alpha,\partial_\beta)$, it is
\[
M(r)=\frac{4}{\kappa}
\begin{pmatrix}
c^2+\eta c^4 & \eta c^2s^2\\
\eta c^2s^2 & s^2+\eta s^4
\end{pmatrix},
\qquad
\eta=\frac{4\tau^2}{\kappa}-1.
\]
Thus the induced metric on $T_{r_0}$ has coefficient matrix $M(r_0)$. A direct expansion gives
\[
\det M(r)=\frac{16}{\kappa^2}\,c^2s^2\big[1+\eta(c^2+s^2)\big]
=\frac{64\tau^2}{\kappa^3}\,c^2s^2>0,
\]
and
\[
M_{\alpha\alpha}(r)=\frac{4}{\kappa}c^2
\left(s^2+\frac{4\tau^2}{\kappa}c^2\right)>0.
\]
Thus $M(r_0)$ is positive definite, and the map $(\alpha,\beta)\mapsto(\cos r_0\,e^{i\alpha},\sin r_0\,e^{i\beta})$ is an injective immersion of a compact torus, hence an embedding with image $T_{r_0}$. Since the coefficients of $M(r_0)$ are constant in $(\alpha,\beta)$, all Christoffel symbols of the induced metric in these coordinates vanish, and $T_{r_0}$ is flat.

The ambient metric $g_{\kappa,\tau}$ satisfies $(g_{\kappa,\tau})_{rr}=\tfrac{4}{\kappa}$ and has no $dr$-angular cross terms, so $N=-\frac{\sqrt{\kappa}}{2}\partial_r$ is a unit normal along $T_{r_0}$. For angular coordinate fields $X,Y$, which commute with $\partial_r$ and whose inner products depend only on $r$, the Koszul formula gives
\[
2g_{\kappa,\tau}(\nabla_XY,\partial_r)
=-\partial_r\,g_{\kappa,\tau}(X,Y),
\]
and therefore
\[
\mathrm{II}(X,Y)=g_{\kappa,\tau}(\nabla_XY,N)
=\frac{\sqrt{\kappa}}{4}\,\partial_r\,g_{\kappa,\tau}(X,Y).
\]
Thus, as matrices,
\[
\mathrm{II}=\frac{\sqrt{\kappa}}{4}M'(r_0),
\]
where $'$ denotes $\partial_r$, and hence
\[
H=\tfrac12\operatorname{trace}\big(M(r_0)^{-1}\mathrm{II}\big)
=\left.\frac{\sqrt{\kappa}}{8}\,\partial_r\log\det M(r)\right|_{r=r_0}.
\]
The $r$-dependent factor of $\det M(r)$ is $c^2s^2$, and $\partial_r\log(c^2s^2)=2(\cot r-\tan r)=4\cot(2r)$. Evaluating at $r_0$ gives $H=\frac{\sqrt{\kappa}}{2}\cot(2r_0)$, and $\cot(2r_0)$ takes every real value as $r_0$ ranges over $(0,\tfrac{\pi}{2})$.

Finally, for $(z,w)\in T_{r_0}$ the complete Hopf fiber $\{(e^{i\theta}z,e^{i\theta}w):\theta\in\mathbb{R}\}$ stays in $T_{r_0}$, because multiplying both coordinates by $e^{i\theta}$ does not change their moduli. Lemma~\ref{lem:invariant-criterion} then shows that $T_{r_0}$ lies in no open hemisphere.
\end{proof}

\section{The rotational profile system}\label{sec:profile}

In this section, we prove the reduction of the CMC equation for rotational surfaces in the normalized Berger spheres to an explicit first-order profile system, and we derive the conserved quantity, the turning polynomial, and the holonomy and amplitude integrals.

Throughout this section we fix $\tau>0$ and work on $(\mathbb{S}^3,g_\tau)$ in the coordinate region $0<r<\tfrac{\pi}{2}$; recall the notation of Set-up~\ref{setup:berger}. A \emph{rotational surface} is an oriented surface invariant under the circle action $\Phi_\theta(z,w)=(z,e^{i\theta}w)$, generated by a regular profile curve $\gamma=(r,\alpha)$ in the orbit space.

\begin{prop}[The profile system]\label{prop:profile-system}
Let $\tau>0$, let $H\in\mathbb{R}$, and consider $(\mathbb{S}^3,g_\tau)$ as above. Then the following statements hold.
\begin{enumerate}
\item The orbit $\{\Phi_\theta(p)\}$ through a point $p$ with coordinates $(r,\alpha,\beta)$ has length $2\pi\ell$ with $\ell=s\sqrt{D}$. The metric $g_\tau$ is the ambient metric, the quotient metric on the orbit space is
\[
q=dr^2+\frac{\tau^2c^2}{D}\,d\alpha^2,
\]
and the area of a rotational surface equals $2\pi$ times the length of its profile in the weighted metric
\[
h=\ell^2 q=s^2D\,dr^2+\tau^2s^2c^2\,d\alpha^2 .
\]
\item A rotational surface has constant mean curvature $H$ with respect to the compatible unit normal, namely the unit normal whose horizontal projection is the $+\tfrac{\pi}{2}$-rotation of the profile tangent in the oriented orbit space, if and only if, after parametrizing its profile by $h$-arclength $u$ and defining the tangent angle $\theta\in\mathbb{R}/2\pi\mathbb{Z}$ by
\begin{equation}\label{eq:theta-def}
s\sqrt{D}\;\frac{dr}{du}=\cos\theta,\qquad \tau sc\;\frac{d\alpha}{du}=\sin\theta,
\end{equation}
the profile satisfies the autonomous system, where any local real lift of $\theta$ may be used to write $d\theta/du$,
\begin{equation}\label{eq:profile-system}
\frac{dr}{du}=\frac{\cos\theta}{s\sqrt{D}},\qquad
\frac{d\alpha}{du}=\frac{\sin\theta}{\tau sc},\qquad
\frac{d\theta}{du}=\frac{2\big(H-\cot(2r)\sin\theta\big)}{s\sqrt{D}} .
\end{equation}
\item Along every solution of \eqref{eq:profile-system}, the function
\begin{equation}\label{eq:energy}
\mathcal E(r,\theta)=sc\,\sin\theta-Hs^2
\end{equation}
is constant. Denote its scalar value along an orbit by $e$. Then $\sin\theta=(e+Ht)/\sqrt{t(1-t)}$, and consequently the motion is confined to the region $P(t)\geq 0$, where
\begin{equation}\label{eq:turning-polynomial}
P(t)=t(1-t)-(e+Ht)^2 .
\end{equation}
\item Let $H>0$. Prescribe a conserved value $e\in(-H,0)$ and set $a=-e/H\in(0,1)$; equivalently, choose $a\in(0,1)$ first and set $e=-aH$. This is the construction regime considered below. Then $P(t)=t(1-t)-H^2(t-a)^2$ has exactly two roots $t_1,t_2$ with $0<t_1<a<t_2<1$, both simple. Every solution of \eqref{eq:profile-system} on the level $\{\mathcal E=-aH\}$ is periodic in $(r,\theta)$: the coordinate $t=\sin^2 r$ oscillates between $t_1$ and $t_2$, and $d\theta/du\neq 0$ at both turning points. On a branch on which $t$ increases,
\begin{equation}\label{eq:dalphadt}
\frac{d\alpha}{dt}=\frac{H(t-a)\,R_\tau(t)}{2\tau\sqrt{P(t)}},
\qquad
R_\tau(t)=\frac{\sqrt{1-t+\tau^2t}}{\sqrt{t}\,(1-t)},
\end{equation}
and $d\alpha/dt$ changes sign exactly at $t=a$.
\item In the situation of $(4)$, the advance of $\alpha$ during one complete radial oscillation is
\begin{equation}\label{eq:holonomy}
\Delta\alpha=\frac{1}{\tau}\int_{t_1}^{t_2}\frac{H(t-a)\,R_\tau(t)}{\sqrt{P(t)}}\,dt,
\end{equation}
and the absolute change of $\alpha$ from the lower turning point to $t=a$ on such a branch is
\begin{equation}\label{eq:amplitude}
A=-\frac{1}{2\tau}\int_{t_1}^{a}\frac{H(t-a)\,R_\tau(t)}{\sqrt{P(t)}}\,dt\ >\ 0 .
\end{equation}
Both improper integrals converge, the endpoint singularities being of inverse square root type. The profile closes after finitely many radial oscillations if and only if $\Delta\alpha\in 2\pi\mathbb{Q}$, and its net winding around the axis $\{w=0\}$ vanishes if and only if $\Delta\alpha=0$.
\end{enumerate}
\end{prop}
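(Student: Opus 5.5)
The plan is to treat the orbit space with the standard Hsiang--Lawson reduction. For item (1), note that $\partial_\beta$ generates $\Phi_\theta$, and compute $g_\tau(\partial_\beta,\partial_\beta)=s^2+(\tau^2-1)s^4=s^2D$, which gives $\ell=s\sqrt D$. The quotient metric is obtained as the restriction of $g_\tau$ to the $g_\tau$-orthogonal complement of $\partial_\beta$, i.e.\ as a Schur complement. From $g_{\alpha\alpha}=c^2+(\tau^2-1)c^4$ and $g_{\alpha\beta}=(\tau^2-1)c^2s^2$ we get
\[
g_{\alpha\alpha}-\frac{g_{\alpha\beta}^2}{g_{\beta\beta}}=\frac{\det}{s^2D}=\frac{\tau^2c^2s^2}{s^2D}.
\]
Here the determinant equals $\tau^2c^2s^2$, by the same computation as in Proposition~\ref{prop:hopf-tori} with $\kappa=4$. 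Since there is no $dr$ cross term, this yields $q$. The area formula then follows from the coarea/Fubini over orbits, since the area element equals $\ell\,ds_q\,d\theta$.

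For item (2), I will use the principle that rotational CMC surfaces are the critical points of area minus $2H$ times enclosed volume. The enclosed-volume functional descends to a one-form on the orbit space: the volume density is $\sqrt{\det g_\tau}=\tau sc$ in the coordinates $(r,\alpha,\beta)$, so integrating over $\beta$ gives $2\pi\tau sc\,dr\wedge d\alpha$. The profile is therefore a magnetic geodesic for $h$ with magnetic two-form $2H\tau sc\,dr\wedge d\alpha$. This says that the $h$-geodesic curvature $k_h$ equals $2H\tau sc/\sqrt{\det h}$, and since $\sqrt{\det h}=\tau s^2c\sqrt D$, this gives $k_h=2H/(s\sqrt D)$. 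To make this concrete I will take the $h$-orthonormal coframe $e^1=s\sqrt D\,dr$, $e^2=\tau sc\,d\alpha$, which makes \eqref{eq:theta-def} immediate. I will then compute the connection form $\omega$ from $de^2=-\omega\wedge e^1$; the $de^1$ term vanishes because $e^1$ depends only on $r$. This gives $\omega=\frac{(\tau sc)'}{s\sqrt D\,\tau sc}\,e^2$. Now $d\theta/du=k_h-\omega(\gamma')$, and
\[
\frac{(sc)'}{sc}=\cot r-\tan r=2\cot 2r,
\]
which yields the term $-2\cot(2r)\sin\theta/(s\sqrt D)$. The main obstacle is the sign and normal conventions: the magnetic sign has to be matched with the chosen compatible normal and with $H=\tfrac12\operatorname{tr}\mathrm{II}$. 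I would fix it by checking against the Hopf tori $r=r_0$. A profile $\alpha$-line corresponds to $\theta=\pm\pi/2$ with $d\theta/du=0$, so the system forces $H=\pm\cot 2r_0$, matching Proposition~\ref{prop:hopf-tori}.

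For item (3), I differentiate $\mathcal E$ along \eqref{eq:profile-system}. Using $(sc)'=\cos2r$ and $(s^2)'=\sin2r$, we get
\[
\frac{d\mathcal E}{du}=\frac{\cos\theta}{s\sqrt D}\Bigl(\cos 2r\sin\theta-H\sin2r\Bigr)+sc\cos\theta\cdot\frac{2(H-\cot2r\sin\theta)}{s\sqrt D},
\]
which vanishes because $2sc=\sin2r$. Solving for $\sin\theta$ and using $sc=\sqrt{t(1-t)}$ together with $|\sin\theta|\le1$ gives the region $P\ge0$.

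For item (4), $P$ is a concave quadratic satisfying $P(0)=-H^2a^2<0$, $P(1)=-H^2(1-a)^2<0$ and $P(a)>0$. Hence it has two simple roots with $0<t_1<a<t_2<1$. At a root $\cos\theta=0$, and since $dt/du=2sc\cos\theta\,dr/du\cdot(\ldots)$, the radial velocity vanishes. The claim $d\theta/du\ne0$ there is equivalent to $P'(t_i)\neq0$, which holds because the roots are simple. So $t$ genuinely turns around, and the reduced one-degree-of-freedom motion on the compact curve $\{\mathcal E=e\}\cap\{t_1\le t\le t_2\}$ is periodic. Dividing, we have $d\alpha/dt=(d\alpha/du)/(dt/du)$, where $dt/du=2sc\cos\theta/(s\sqrt D)$, $\sin\theta=H(t-a)/\sqrt{t(1-t)}$ and $\cos\theta=\sqrt{P}/\sqrt{t(1-t)}$ on an increasing branch. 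Simplifying gives \eqref{eq:dalphadt}.

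For item (5), the decreasing branch has $\cos\theta<0$ and the same $\sin\theta$, so it contributes the same integral. This doubles the half-oscillation and gives \eqref{eq:holonomy}. The amplitude \eqref{eq:amplitude} is read off directly, and its positivity holds because $t-a<0$ on $(t_1,a)$. Convergence of both integrals follows from the simple roots. Finally, closing: since the $(r,\theta)$-motion is periodic and $\alpha$ shifts by $\Delta\alpha$ each period, the profile closes exactly when $k\Delta\alpha\in2\pi\mathbb Z$ for some $k\geq1$. After $k$ periods the winding number is $k\Delta\alpha/2\pi$, which is zero if and only if $\Delta\alpha=0$.
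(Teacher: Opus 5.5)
Your proposal is correct. For items (1), (3), (4) and (5) it follows the paper's proof essentially step by step:
\begin{itemize}
\item the Schur complement for $q$;
\item direct differentiation of $\mathcal E$;
\item the signs of $P$ at $0,a,1$;
\item the equivalence of $d\theta/du=0$ at a turning point with $P'(t_i)=0$;
\item quadrature of $d\alpha/du$ against $dt/du$ on the two branches.
\end{itemize}

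The genuine difference is how you obtain the key identity $k_h=2H/\ell$ in item (2).

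\emph{The paper's route.} It computes the second fundamental form directly. In the frame $(V,N,U)$ it uses $\mathrm{II}(V,V)=k_q$, and the Killing equation for $K=\partial_\beta$ gives $\mathrm{II}(U,U)=-N(\log\ell)$. Hence $2H_\Sigma=k_q-N(\log\ell)$. The conformal change rule for $h=\ell^2q$ then converts this into $k_h=\ell^{-1}[k_q-N(\log\ell)]=2H_\Sigma/\ell$. The sign is dictated by the convention $\mathrm{II}(X,Y)=g(\nabla_XY,N)$.

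\emph{Your route.} You reduce the area-minus-volume functional, using area $=2\pi L_h$ and the volume density $\sqrt{\det g_\tau}=\tau sc$, so that $2H\tau sc/\sqrt{\det h}=2H/(s\sqrt D)$. This is shorter and explains conceptually why the profile is a magnetic geodesic. The cost is that it relies on the first-variation formulas and a symmetric-criticality step, and you fix the sign externally.

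Your Hopf-torus sign check is legitimate. At $\theta=\pi/2$ the compatible normal is $-\partial_r$, and the system gives $H=\cot 2r_0$. This matches Proposition~\ref{prop:hopf-tori} at $\kappa=4$, which the paper proves independently, so there is no circularity.

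You could avoid both the global ``enclosed volume'' and the external check by comparing the two first variations pointwise. For rotationally invariant normal variations $fN$ one has $\delta A=-\int_\Sigma 2H_\Sigma f\,dA=-2\pi\int 2H_\Sigma f\,ds_h$. One also has $\delta(2\pi L_h)=-2\pi\int k_h\,\ell f\,ds_h$. Together these give $\ell k_h=2H_\Sigma$ with the sign already determined.

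Your Cartan structure-equation computation of $\omega$ is equivalent to the paper's Lie-bracket and Koszul computation.

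One small point in (4): periodicity is cleaner if you note, as the paper does, that the level set is a single closed curve carrying a nonvanishing vector field. For each $t\in(t_1,t_2)$ it has exactly two points, one with $\cos\theta>0$ and one with $\cos\theta<0$, and these branches join at the turning points. Also, the radial relation you need is simply $dt/du=2sc\,dr/du$.
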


\begin{proof}
We prove the proposition in five steps. We first compute the orbit length and the two profile metrics, and then derive the CMC system with an explicit hierarchy of the associated frames. We next isolate the planar dynamics and its first integral, analyze the selected energy level, and finally recover the angular motion and closing data.

\medskip

\noindent\textbf{Step 1.} We compute the orbit length and identify the two profile metrics.

Write $\eta_0=\tau^2-1$, so that by \eqref{eq:normalized-metric} the nonzero coefficients of the ambient metric $g_\tau$ in the coordinate order $(r,\alpha,\beta)$ are
\[
(g_\tau)_{rr}=1,\qquad
(g_\tau)_{\alpha\alpha}=c^2(1+\eta_0c^2),\qquad
(g_\tau)_{\alpha\beta}=\eta_0c^2s^2,\qquad
(g_\tau)_{\beta\beta}=s^2(1+\eta_0s^2)=s^2D .
\]
The determinant of the $\alpha\beta$-block is
\[
c^2s^2\big[(1+\eta_0c^2)(1+\eta_0s^2)-\eta_0^2c^2s^2\big]
=c^2s^2\big[1+\eta_0(c^2+s^2)\big]
=\tau^2c^2s^2 .
\]
The orbit of the action $\Phi_\theta$ is generated by the Killing field $K=\partial_\beta$, whence $\ell=|K|_{g_\tau}=s\sqrt{D}$ and the orbit length is $2\pi\ell$. Let $\pi$ denote the quotient map. Orthogonal projection away from $K$ gives
\[
q_{rr}=1,\qquad
q_{\alpha\alpha}=(g_\tau)_{\alpha\alpha}
-\frac{(g_\tau)_{\alpha\beta}^2}{(g_\tau)_{\beta\beta}}
=\frac{(g_\tau)_{\alpha\alpha}(g_\tau)_{\beta\beta}
-(g_\tau)_{\alpha\beta}^2}{(g_\tau)_{\beta\beta}}
=\frac{\tau^2c^2}{D},
\]
so $q$ is the quotient metric. The area of the rotational surface generated by a profile $\gamma$ equals
\[
\int 2\pi\ell\,\sqrt{q(\gamma',\gamma')}\,du
=2\pi\int\sqrt{h(\gamma',\gamma')}\,du,
\]
where $h=\ell^2q$ is the weighted metric governing profile length. This proves $(1)$.

\medskip

\noindent\textbf{Step 2.} We derive the CMC profile system and make the frame hierarchy explicit.

Let $(v,n)$ be the oriented $q$-orthonormal tangent and normal pair along the profile. Let $(V,N)$ be their horizontal lifts to $(\mathbb{S}^3,g_\tau)$, and let
\[
U=\frac{K}{\ell}
\]
be the unit orbit direction. Then
\[
d\pi(V)=v,\qquad d\pi(N)=n,\qquad d\pi(U)=0.
\]
Thus $(V,N,U)$ is a $g_\tau$-orthonormal ambient frame and
\[
T\Sigma=\operatorname{span}\{V,U\}.
\]

The horizontal part of $\nabla^{g_\tau}_VV$ projects to the $q$-covariant acceleration of the profile, so
\[
\mathrm{II}(V,V)=g_\tau(\nabla^{g_\tau}_VV,N)=k_q,
\]
where $k_q=q(\nabla^q_vv,n)$. For every horizontal vector $X$, the Killing equation gives
\[
g_\tau(\nabla^{g_\tau}_KK,X)
=-g_\tau(\nabla^{g_\tau}_XK,K)
=-\tfrac12 X\big(|K|_{g_\tau}^2\big).
\]
Dividing by $\ell^2$ yields $g_\tau(\nabla^{g_\tau}_UU,X)=-X(\log\ell)$. Hence $\mathrm{II}(U,U)=-N(\log\ell)$, and the trace identity reads
\begin{equation}\label{eq:trace-identity}
2H_{\Sigma}=k_q-N(\log\ell),
\end{equation}
where $H_\Sigma$ is the mean curvature function of the rotational surface.

For the weighted metric, set
\[
\widehat v=\ell^{-1}v,\qquad \widehat n=\ell^{-1}n.
\]
Then $(\widehat v,\widehat n)$ is an oriented $h$-orthonormal pair. Write $h=e^{2\varphi}q$ with $\varphi=\log\ell$. The conformal transformation rule
\[
\nabla^h_XY=\nabla^q_XY+X(\varphi)Y+Y(\varphi)X-q(X,Y)\,\nabla^q\varphi,
\]
gives
\[
\nabla^h_{\widehat v}\widehat v
=e^{-2\varphi}\big[\nabla^q_vv+v(\varphi)v-\nabla^q\varphi\big].
\]
Pairing with $\widehat n$ under $h=e^{2\varphi}q$ yields
\begin{equation}\label{eq:conformal-curvature}
k_h=h\big(\nabla^h_{\widehat v}\widehat v,\widehat n\big)
=\ell^{-1}\big[k_q-n(\log\ell)\big]
=\ell^{-1}\big[k_q-N(\log\ell)\big].
\end{equation}
Combining \eqref{eq:trace-identity} and \eqref{eq:conformal-curvature}, the rotational surface has constant mean curvature $H$ if and only if
\begin{equation}\label{eq:magnetic}
k_h=\frac{2H}{\ell}=\frac{2H}{s\sqrt{D}} .
\end{equation}

The normalized coordinate fields for $h$ are
\[
\widehat e_r=\frac{1}{s\sqrt D}\,\partial_r,
\qquad
\widehat e_\alpha=\frac{1}{\tau sc}\,\partial_\alpha.
\]
Write
\[
\widehat v=\cos\theta\,\widehat e_r+\sin\theta\,\widehat e_\alpha,
\qquad
\widehat n=-\sin\theta\,\widehat e_r+\cos\theta\,\widehat e_\alpha.
\]
This is precisely the definition \eqref{eq:theta-def}. The compatible ambient normal is the horizontal lift $N$ of $n$; replacing it by its negative replaces $H$ by $-H$ throughout.

Since the metric coefficients depend only on $r$, the Lie bracket is
\[
[\widehat e_r,\widehat e_\alpha]=-\mu\widehat e_\alpha,
\qquad
\mu=\frac{\cos(2r)}{s^2c\sqrt D}
=\frac{2\cot(2r)}{s\sqrt D},
\]
and the Koszul formula gives
\[
\nabla^h_{\widehat e_r}\widehat e_r
=\nabla^h_{\widehat e_r}\widehat e_\alpha=0,
\qquad
\nabla^h_{\widehat e_\alpha}\widehat e_\alpha=-\mu\widehat e_r,
\qquad
\nabla^h_{\widehat e_\alpha}\widehat e_r=\mu\widehat e_\alpha .
\]
Differentiating $\widehat v$ along the profile in $h$-arclength gives
\[
\nabla^h_{\widehat v}\widehat v
=\left(\frac{d\theta}{du}+\mu\sin\theta\right)\widehat n,
\]
and hence
\[
k_h=\frac{d\theta}{du}
+\frac{2\cot(2r)}{s\sqrt D}\,\sin\theta.
\]
Substituting into \eqref{eq:magnetic} gives the third equation of \eqref{eq:profile-system}, while the first two equations are \eqref{eq:theta-def}. This proves $(2)$.

\medskip

\noindent\textbf{Step 3.} We isolate the planar subsystem and obtain its first integral and turning polynomial.

The first and third equations of \eqref{eq:profile-system} form the planar autonomous subsystem
\begin{equation}\label{eq:planar-system}
\frac{dr}{du}=\frac{\cos\theta}{s\sqrt D},
\qquad
\frac{d\theta}{du}
=\frac{2\big(H-\cot(2r)\sin\theta\big)}{s\sqrt D}.
\end{equation}
The full profile system is a skew product over \eqref{eq:planar-system}: the equation for $\alpha$ depends on $(r,\theta)$ but not on $\alpha$, and $\alpha$ will be recovered by quadrature in \textbf{Step~5}.

Differentiating the function $\mathcal E$ of \eqref{eq:energy} along \eqref{eq:planar-system}, and writing $r'=\cos\theta/(s\sqrt D)$, gives
\[
\frac{d\mathcal E}{du}
=\cos(2r)\,r'\sin\theta+sc\cos\theta\,\frac{d\theta}{du}-2Hsc\,r'
\]
\[
=\frac{\cos(2r)\cos\theta\sin\theta}{s\sqrt D}
+\frac{2c\cos\theta\big(H-\cot(2r)\sin\theta\big)}{\sqrt D}
-\frac{2Hc\cos\theta}{\sqrt D}=0,
\]
because $2c\cot(2r)=\cos(2r)/s$. Thus $\mathcal E$ is a first integral. If its scalar value is $e$, solving \eqref{eq:energy} for $\sin\theta$ gives
\[
\sin\theta=\frac{e+Ht}{\sqrt{t(1-t)}}.
\]
Therefore $\sin^2\theta\leq1$ is equivalent to $P(t)\geq0$ for the turning polynomial \eqref{eq:turning-polynomial}. This proves $(3)$.

\medskip

\noindent\textbf{Step 4.} We choose the energy regime, locate the simple roots, and prove periodicity of the planar level.

Let $H>0$. We prescribe $e\in(-H,0)$ and set $a=-e/H\in(0,1)$, or equivalently choose $a\in(0,1)$ and set $e=-aH$. This is a choice of construction regime, not a conclusion extracted from the differential equation. On the level $\{\mathcal E=-aH\}$,
\[
P(t)=t(1-t)-H^2(t-a)^2
\]
satisfies
\[
P(0)=-a^2H^2<0,\qquad P(a)=a(1-a)>0,\qquad P(1)=-H^2(1-a)^2<0,
\]
so the downward quadratic $P$ has exactly two roots $t_1\in(0,a)$ and $t_2\in(a,1)$. These roots are simple because the two sign changes place distinct roots on opposite sides of $a$. Put
\[
r_j=\arcsin\sqrt{t_j}\qquad (j=1,2).
\]

An equilibrium of \eqref{eq:planar-system} satisfies $\cos\theta=0$ and $H=\sin\theta\cot(2r)$. At such a point $t$ is a root of $P$, and
\[
P'(t)=1-2t-2H^2(t-a)
=1-2t-2H\sin\theta\sqrt{t(1-t)}=0.
\]
Thus an equilibrium on $\{\mathcal E=-aH\}$ would give a repeated root, which has been excluded. Hence the level contains no equilibria.

The level is regular. We have
\[
\frac{\partial\mathcal E}{\partial\theta}=sc\cos\theta,
\qquad
\frac{\partial\mathcal E}{\partial r}
=\cos(2r)\sin\theta-2Hsc.
\]
A common zero would satisfy the equilibrium equations above, so $d\mathcal E$ does not vanish on the level. Moreover, the level lies in the compact cylinder
\[
[r_1,r_2]\times(\mathbb{R}/2\pi\mathbb{Z})
\subset(0,\tfrac{\pi}{2})\times(\mathbb{R}/2\pi\mathbb{Z}).
\]
For each $t\in(t_1,t_2)$, the energy relation determines exactly two points modulo $2\pi$: one with $\cos\theta>0$ and one with $\cos\theta<0$. These branches join at the two turning points and form one compact regular circle.

The radial speed satisfies
\begin{equation}\label{eq:radial-speed}
\left(\frac{dt}{du}\right)^2
=\frac{4P(t)}{t(1-t+\tau^2t)}.
\end{equation}
Thus $dt/du=0$ at $t=t_1,t_2$. At either turning point, however, the planar vector field does not vanish, so
\[
\frac{d\theta}{du}
=\frac{2\big(H-\sin\theta\cot(2r)\big)}{s\sqrt D}\neq0.
\]
This is the required turning-point transversality: the trajectory moves in the $\theta$-direction rather than crossing the turning value in the $t$-direction.

The planar vector field is nonzero on the compact regular circle, so every orbit traverses that circle in finite time and is periodic. Along one period, the lower branch has $t$ increasing from $t_1$ to $t_2$, while the upper branch has $t$ decreasing from $t_2$ to $t_1$. On the lines $\theta=0$ and $\theta=\pi$, the vector field in \eqref{eq:planar-system} is a positive multiple of $(1,2H)$ and $(-1,2H)$, respectively. Thus it points northeast along $\theta=0$ and northwest along $\theta=\pi$. These ambient directions, together with the selected periodic level, are shown schematically in Figure~\ref{fig:phase-cylinder}.

\begin{figure}[!htbp]
\centering
\begin{tikzpicture}[x=.12\linewidth,y=.9cm]
  \draw[gray] (0,0)--(0,4) (6,0)--(6,4);
  \draw[gray,dashed] (0,0)--(6,0) (0,4)--(6,4);
  \draw[gray,dotted] (0,2)--(6,2);

  % Normalized direction field for H=1/sqrt(3).
  % The plotting coordinates are x=12r/pi and y=1+2theta/pi.
  \begin{scope}[gray!55,line width=.3pt,->]
    \foreach \x in {.25,.75,1.25,1.75,2.25,2.75,3.25,3.75,4.25,4.75,5.25,5.75}{
      \foreach \y in {.3,1,1.5,1.85,2.15,2.5,3,3.7}{
        \pgfmathsetmacro{\thetafield}{90*(\y-1)}
        \pgfmathsetmacro{\vxfield}{3*cos(\thetafield)}
        \pgfmathsetmacro{\vyfield}
          {1/sqrt(3)-cos(30*\x)/sin(30*\x)*sin(\thetafield)}
        \pgfmathsetmacro{\normfield}
          {sqrt((1.93*\vxfield)*(1.93*\vxfield)
            +(\vyfield)*(\vyfield))}
        \pgfmathsetmacro{\dxfield}{.12*\vxfield/\normfield}
        \pgfmathsetmacro{\dyfield}{.12*\vyfield/\normfield}
        \draw
          ({\x-\dxfield},{\y-\dyfield})
          --
          ({\x+\dxfield},{\y+\dyfield});
      }
    }
  \end{scope}

  % The two equilibria; the points on the dashed edges are identified.
  \fill[gray!75] (2,2) circle[radius=1.2pt];
  \fill[gray!75] (4,0) circle[radius=1.2pt];
  \fill[gray!75] (4,4) circle[radius=1.2pt];

  \draw[gray] (1,0.08)--(1,-0.08) (5,0.08)--(5,-0.08);
  \node[below] at (1,-0.08) {$r_1$};
  \node[below] at (5,-0.08) {$r_2$};
  \node[below] at (3,-0.42) {$r$};
  \node[left] at (0,0) {$-\pi/2$};
  \node[left] at (0,1) {$0$};
  \node[left] at (0,2) {$\pi/2$};
  \node[left] at (0,3) {$\pi$};
  \node[left] at (0,4) {$3\pi/2$};
  \node[above] at (3,4) {$\theta\bmod 2\pi$};

  % The exact periodic level for H=1/sqrt(3) and a=1/2.
  \draw[black,line width=1pt,->]
    plot[domain=-90:90,samples=161,variable=\q]
      ({acos(-sqrt(3)*sin(\q)/sqrt(1+3*sin(\q)^2))/30},
       {1+\q/90});
  \draw[black,line width=1pt,->]
    plot[domain=90:270,samples=161,variable=\q]
      ({acos(-sqrt(3)*sin(\q)/sqrt(1+3*sin(\q)^2))/30},
       {1+\q/90});

  \node[fill=white,inner sep=1pt] at (3.6,.65) {$dt/du>0$};
  \node[fill=white,inner sep=1pt] at (3.4,3.35) {$dt/du<0$};
\end{tikzpicture}
\caption{A representative phase portrait of \eqref{eq:planar-system} for $H=1/\sqrt3$ and $a=1/2$. The light-gray arrows give the normalized direction field, while the thick black curve is the exact periodic level $\{\mathcal E=-1/(2\sqrt3)\}$ with $r_1=\pi/12$ and $r_2=5\pi/12$. The gray points mark the two equilibria, the dashed edges are identified, and the black arrows record the flow direction along the periodic orbit; arrow length does not encode speed.}
\label{fig:phase-cylinder}
\end{figure}

This proves the periodicity and the turning-point assertions of $(4)$.

\medskip

\noindent\textbf{Step 5.} We recover $\alpha$ by quadrature and derive holonomy, amplitude, closing, and winding.
\par\nopagebreak[4]

On the selected energy level,
\[
\frac{d\alpha}{du}
=\frac{\sin\theta}{\tau sc}
=\frac{H(t-a)}{\tau t(1-t)}.
\]
We treat the two radial branches separately.

\textbf{Case 1.} Suppose that $t$ increases from $t_1$ to $t_2$. Then
\[
\frac{dt}{du}
=\frac{2\sqrt{P(t)}}{\sqrt{t(1-t+\tau^2t)}},
\]
and division gives
\[
\frac{d\alpha}{dt}
=\frac{H(t-a)\sqrt{1-t+\tau^2t}}
{2\tau\sqrt t\,(1-t)\sqrt{P(t)}}
=\frac{H(t-a)R_\tau(t)}{2\tau\sqrt{P(t)}}.
\]
This is \eqref{eq:dalphadt}. All factors other than $t-a$ are positive on $(t_1,t_2)$, so $d\alpha/dt$ changes sign exactly once, at $t=a$. The contribution of this branch to the change of $\alpha$ is
\[
\int_{t_1}^{t_2}
\frac{H(t-a)R_\tau(t)}{2\tau\sqrt{P(t)}}\,dt.
\]

\textbf{Case 2.} Suppose that $t$ decreases from $t_2$ to $t_1$. The sign of $dt/du$ is reversed, while $d\alpha/du$ is unchanged as a function of $t$. Hence $d\alpha/dt$ is the negative of the expression in \textbf{Case~1}, and the contribution along the decreasing orientation is
\[
\int_{t_2}^{t_1}
-\frac{H(t-a)R_\tau(t)}{2\tau\sqrt{P(t)}}\,dt
=\int_{t_1}^{t_2}
\frac{H(t-a)R_\tau(t)}{2\tau\sqrt{P(t)}}\,dt.
\]
Adding the two contributions gives \eqref{eq:holonomy}. On the increasing branch the integrand is negative on $(t_1,a)$, so the absolute angular change from $t_1$ to $a$ is \eqref{eq:amplitude}, and it is positive.

The convergence now follows from the simplicity proved in \textbf{Step~4}. Near either root $t_j$, the function $P(t)$ vanishes to first order, while $R_\tau$ and the remaining factors are bounded because $0<t_1<t_2<1$. Thus the endpoint singularities are bounded by constant multiples of $|t-t_j|^{-1/2}$, and both improper integrals converge.

Finally, after each planar period the coordinate $\alpha$ changes by the same amount $\Delta\alpha$, because the full system is a skew product. Since $\alpha$ is taken modulo $2\pi$, the profile closes after finitely many periods if and only if $\Delta\alpha\in2\pi\mathbb{Q}$. If it closes after $m$ periods, its degree around the axis $\{w=0\}$ is $m\Delta\alpha/(2\pi)$; this degree vanishes if and only if $\Delta\alpha=0$. This proves the remaining assertions of $(4)$ and all assertions of $(5)$.
\end{proof}

\begin{rem}\label{rem:constant-r}
Constant-$r$ solutions of \eqref{eq:profile-system} satisfy $\cos\theta=0$ and $H=\sin\theta\cot(2r)$, and their rotational surfaces are the Hopf tori of Proposition~\ref{prop:hopf-tori} in the normalized family; the value $H=\cot(2r_0)$ there agrees with Proposition~\ref{prop:hopf-tori} at $\kappa=4$. They occur on energy levels other than the regime $H>0$, $e\in(-H,0)$ selected in Proposition~\ref{prop:profile-system}$(4)$. More explicitly, when $H>0$ and $\sin\theta=1$, their scalar energy is $e=\tfrac12\tan r>0$, while for $\sin\theta=-1$ it is $e=-\tfrac12\tan r<-H$. Thus no constant-$r$ solution lies in the energy band of $(4)$.
\end{rem}

\section{The lens construction}\label{sec:lens}

In this section, we prove that a zero-holonomy oscillation with small angular amplitude closes up after a single radial period, and that the resulting rotational surface is an embedded torus contained in an open hemisphere.

\begin{prop}[Lens lemma]\label{prop:lens}
Fix $\tau>0$, $H>0$, and $a\in(0,1)$. Let $t_1<t_2$ be the two roots of $P(t)=t(1-t)-H^2(t-a)^2$, and let $\Delta\alpha$ and $A$ be the holonomy \eqref{eq:holonomy} and the amplitude \eqref{eq:amplitude} of the energy level $\{\mathcal E=-aH\}$, as in Proposition~\ref{prop:profile-system}$(4)$--$(5)$. Let $\gamma$ be the solution of \eqref{eq:profile-system} with initial data
\[
r=\arcsin\sqrt{t_1},\qquad \alpha=0,\qquad \theta=-\tfrac{\pi}{2}.
\]
Assume that
\[
\Delta\alpha=0
\qquad\text{and}\qquad
A<\tfrac{\pi}{2}.
\]
Then the conserved function $\mathcal E$ of \eqref{eq:energy} has value $-aH$ along $\gamma$, the profile of $\gamma$ is a smooth simple closed curve in the orbit space, and its rotational surface $\Sigma$ is a smooth closed embedded torus of constant mean curvature $H$ in $(\mathbb{S}^3,g_\tau)$, contained in the open hemisphere $\mathcal{H}_{(1,0)}=\{\operatorname{Re}z>0\}$ with the quantitative margin
\[
x\cdot(1,0)\ \geq\ \sqrt{1-t_2}\,\cos A\ >\ 0
\qquad\text{for every }x\in\Sigma .
\]
\end{prop}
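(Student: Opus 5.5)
First I check the energy value. At the initial point $t=t_1$ and $\sin\theta=-1$, so
\[
\mathcal E=-\sqrt{t_1(1-t_1)}-Ht_1 .
\]
Since $P(t_1)=0$ and $t_1<a$, we have $\sqrt{t_1(1-t_1)}=H(a-t_1)$, hence $\mathcal E=-aH$. This places $\gamma$ on the periodic level of Proposition~\ref{prop:profile-system}$(4)$. At the initial point $\cos\theta=0$ and $d\theta/du\neq0$, so the orbit starts at the lower turning point. By Proposition~\ref{prop:profile-system}$(4)$ the orbit is periodic in $(r,\theta)$ with some period $u_0$. During one period $t$ increases once from $t_1$ to $t_2$ and then decreases back to $t_1$.

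Next I describe $\alpha$ along the two branches. On the increasing branch, \eqref{eq:dalphadt} shows that $\alpha$ decreases strictly on $(t_1,a)$, from $0$ to $-A$, and then increases strictly on $(a,t_2)$. The total increase over $(a,t_2)$ is $\Delta\alpha/2+A=A$, because $\Delta\alpha=0$. So $\alpha$ returns to $0$ at $t_2$. On the decreasing branch, $d\alpha/du$ is the same function of $t$, namely $H(t-a)/(\tau t(1-t))$. As $t$ goes from $t_2$ back to $a$, $\alpha$ therefore increases from $0$ to $+A$, and then decreases to $0$ at $t_1$.

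It follows that the profile closes after exactly one period. It is a graph over $t\in[t_1,t_2]$ on each branch: $\alpha=-f(t)$ on the lower branch and $\alpha=+f(t)$ on the upper branch, where $f\ge0$ and $f$ vanishes only at $t_1$ and $t_2$. The two branches meet only at the turning points. The curve is smooth there because the orbit is a smooth periodic solution of a regular ODE, and it is regular because the $h$-unit speed is nonzero. Since $|\alpha|\le A<\pi/2$, the real lift of $\alpha$ never wraps modulo $2\pi$, so the closed curve is simple in the orbit space $(0,\pi/2)\times\mathbb R/2\pi\mathbb Z$. A symmetry argument would give the same picture, but the direct branch computation above suffices.

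The main point needing care is passing from the profile to the surface. Proposition~\ref{prop:profile-system}$(2)$ gives constant mean curvature $H$ with respect to the compatible normal. The profile stays in $0<r_1\le r\le r_2<\pi/2$, so the orbits of $\Phi_\theta$ through its points are free circles of positive length $2\pi s\sqrt D$. Over this region the map from the orbit space to $\mathbb S^3$ is a principal circle bundle, explicitly $(r,\alpha,\beta)$ with $\beta$ free. Hence the preimage of a smooth simple closed curve is a smooth embedded torus, namely the circle bundle over a circle, which is trivial because it is the restriction of the trivial product chart. Explicitly, the map $(u,\beta)\mapsto(\cos r(u)e^{i\alpha(u)},\sin r(u)e^{i\beta})$ on $(\mathbb R/u_0\mathbb Z)\times(\mathbb R/2\pi\mathbb Z)$ is an injective immersion of a compact manifold, so it is an embedding.

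Finally I check the hemisphere condition. Every point of $\Sigma$ has $t\le t_2<1$ and $|\alpha|\le A<\pi/2$. Lemma~\ref{lem:hemisphere-criterion}, applied with $\alpha_0=0$, $\delta=A$ and $t_+=t_2$, gives
\[
x\cdot(1,0)\ \ge\ \sqrt{1-t_2}\,\cos A\ >\ 0 ,
\]
which is the stated quantitative margin.
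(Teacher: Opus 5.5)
Your proposal is correct and follows essentially the same route as the paper. The shared steps are: the energy check at the lower turning point; the branch-by-branch description of $\alpha$ as the two graphs $\alpha=\pm f(t)$ using $\Delta\alpha=0$; simplicity from $|\alpha|\le A<\pi/2$; the explicit lift $(u,\beta)\mapsto(\cos r(u)e^{i\alpha(u)},\sin r(u)e^{i\beta})$ as an injective immersion; and Lemma~\ref{lem:hemisphere-criterion} for the margin. The only difference is cosmetic: you justify injectivity of the lift through the free circle action over $0<r<\pi/2$, whereas the paper compares moduli and arguments of the two complex coordinates directly.
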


\begin{proof}
The solution $\gamma$ exists and is unique, since the coefficients of \eqref{eq:profile-system} are smooth on $0<r<\tfrac{\pi}{2}$. Its initial point lies on the energy level $\{\mathcal E=-aH\}$: at $t=t_1$ we have $P(t_1)=0$ and $t_1<a$, so $\sqrt{t_1(1-t_1)}=H(a-t_1)$, and
\[
sc\sin\theta-Hs^2=-\sqrt{t_1(1-t_1)}-Ht_1=-H(a-t_1)-Ht_1=-aH .
\]
By Proposition~\ref{prop:profile-system}$(4)$, the solution is periodic in $(r,\theta)$ and oscillates between $t_1$ and $t_2$.

Choose the parametrization so that $t$ increases immediately after the initial point, and let $\phi(t)$ denote the $\alpha$-coordinate along this increasing branch, so that by \eqref{eq:dalphadt},
\[
\phi(t)=\int_{t_1}^{t}\frac{H(x-a)\,R_\tau(x)}{2\tau\sqrt{P(x)}}\,dx,
\qquad t\in[t_1,t_2].
\]
The integrand is negative on $(t_1,a)$ and positive on $(a,t_2)$. Thus $\phi$ strictly decreases from $\phi(t_1)=0$ to $\phi(a)=-A$ and then strictly increases to $\phi(t_2)=\tfrac12\Delta\alpha=0$. In particular
\[
-A\leq\phi(t)<0\qquad\text{for all }t\in(t_1,t_2).
\]
On the subsequent decreasing branch, $d\alpha/dt$ is the negative of its value on the increasing branch, so starting from $\alpha=0$ at $t=t_2$ the $\alpha$-coordinate equals $-\phi(t)$, and the branch ends at $t=t_1$ with $\alpha=-\phi(t_1)=0$. The solution has then returned to its initial point in $(r,\alpha,\theta)$, so the profile is a closed curve traversed in one radial period, consisting of the two graphs
\[
\alpha=\phi(t)\in[-A,0]
\qquad\text{and}\qquad
\alpha=-\phi(t)\in[0,A],
\qquad t\in[t_1,t_2].
\]
The two graphs intersect only where $\phi(t)=-\phi(t)$, that is, only at the turning points $t\in\{t_1,t_2\}$, because $\phi<0$ on the open interval. All lifted $\alpha$-values lie in $[-A,A]$, an interval of length $2A<\pi<2\pi$, so reducing $\alpha$ modulo $2\pi$ creates no further identifications. The profile is smooth and regular: it is an orbit of the smooth system \eqref{eq:profile-system}, and at the turning points $d\alpha/du=\sin\theta/(\tau sc)=\mp 1/(\tau sc)\neq 0$ while elsewhere $dr/du\neq0$ or $d\alpha/du\neq 0$. Hence the profile is a smooth simple closed curve, a lens with vertices at the two turning points.

We now lift. Define
\[
F:\mathbb{S}^1\times(\mathbb{R}/2\pi\mathbb{Z})\longrightarrow\mathbb{S}^3,
\qquad
F(u,\beta)=\big(\cos r(u)\,e^{i\alpha(u)},\,\sin r(u)\,e^{i\beta}\big),
\]
where $u$ runs over one period of the profile. Since $0<t_1\leq t(u)\leq t_2<1$, both complex coordinates have nonzero moduli. Suppose $F(u,\beta)=F(u',\beta')$. Comparing moduli gives $t(u)=t(u')$; comparing arguments of the first coordinates gives $\alpha(u)=\alpha(u')$ modulo $2\pi$, hence $\alpha(u)=\alpha(u')$ because the lifted $\alpha$-range has length less than $2\pi$; simplicity of the profile then forces $u=u'$, and comparing arguments of the second coordinates gives $\beta=\beta'$. Thus $F$ is injective. It is an immersion, because the profile velocity has a nonzero $(r,\alpha)$-component while the orbit direction is purely in $\beta$, and these are linearly independent. A smooth injective immersion of a compact surface is an embedding, so $\Sigma=F(\mathbb{S}^1\times\mathbb{S}^1)$ is a smooth closed embedded torus, and it has constant mean curvature $H$ by Proposition~\ref{prop:profile-system}$(2)$.

Finally, every point of $\Sigma$ has the form $x=(\sqrt{1-t}\,e^{i\alpha},\sqrt{t}\,e^{i\beta})$ with $t\leq t_2$ and $|\alpha|\leq A<\tfrac{\pi}{2}$. Lemma~\ref{lem:hemisphere-criterion} with $\alpha_0=0$, $\delta=A$, and $t_+=t_2$ gives $x\cdot(1,0)\geq\sqrt{1-t_2}\cos A>0$.
\end{proof}

\begin{rem}\label{rem:anchoring}
The normalization of the lower turning point at $\alpha=0$ in Proposition~\ref{prop:lens} is needed for the stated pole $(1,0)$: the isometries $(z,w)\mapsto(e^{i\sigma}z,w)$ of $g_\tau$ translate profiles in $\alpha$ and preserve all the other hypotheses, while rotating the hemisphere to $\mathcal{H}_{(e^{i\sigma},0)}$.
\end{rem}

\section{Zero holonomy at large mean curvature}\label{sec:asymptotic}

In this section, we prove the holonomy asymptotics and produce zero-holonomy parameters for all sufficiently large mean curvature.

Throughout this section we fix $\tau\in(0,\tfrac{1}{\sqrt2})$ and use the notation
\[
f(t)=t(1-t),\qquad
D_\tau(t)=1-t+\tau^2t,\qquad
R_\tau(t)=\frac{\sqrt{D_\tau(t)}}{\sqrt{t}\,(1-t)},\qquad
P_{H,a}(t)=f(t)-H^2(t-a)^2 ,
\]
and we write $\Delta_{H,\tau}(a)$ and $A_{H,\tau}(a)$ for the holonomy \eqref{eq:holonomy} and the amplitude \eqref{eq:amplitude} of the energy level $\{\mathcal E=-aH\}$, which are defined for all $H>0$ and $a\in(0,1)$ by Proposition~\ref{prop:profile-system}$(4)$--$(5)$.

\begin{prop}[Holonomy asymptotics]\label{prop:asymptotic}
Fix $\tau\in(0,\tfrac{1}{\sqrt2})$ and set
\[
a_c=\frac{1}{2(1-\tau^2)}\in\big(\tfrac12,1\big).
\]
Then, uniformly for $a$ in every compact subinterval of $(0,1)$,
\begin{equation}\label{eq:limit}
\lim_{H\to\infty}H^2\,\Delta_{H,\tau}(a)
=\frac{\pi}{2\tau}\,\frac{d}{da}\sqrt{a\,D_\tau(a)}
=\frac{\pi\,\big[1+2(\tau^2-1)a\big]}{4\tau\sqrt{a\,D_\tau(a)}},
\end{equation}
and $A_{H,\tau}(a)=O(1/H)$ uniformly on every such compact interval. The right-hand side of \eqref{eq:limit} is positive for $a<a_c$ and negative for $a>a_c$. Consequently, for all $a_-,a_+$ with $\tfrac12<a_-<a_c<a_+<1$ there exists $H_0>0$ such that for every $H\geq H_0$ there exists $a_H\in(a_-,a_+)$ with
\[
\Delta_{H,\tau}(a_H)=0,
\qquad\text{and}\qquad
A_{H,\tau}(a_H)\longrightarrow 0\ \text{ as }H\to\infty .
\]
\end{prop}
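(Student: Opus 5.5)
The plan is to rescale the oscillation interval to a fixed interval and expand to second order in $1/H$. Write $P_{H,a}(t)=-(H^2+1)(t-t_1)(t_2-t)$ and note that the center and half-width are
\[
m=\frac{2H^2a+1}{2(H^2+1)}=a+O(H^{-2}),\qquad
\rho=\frac{\sqrt{f(m)+H^{-2}(\ldots)}}{\sqrt{H^2+1}}=\frac{\sqrt{f(a)}}{H}+O(H^{-3}),
\]
uniformly for $a$ in a compact $K\subset(0,1)$. Substitute $t=m+\rho\sin\varphi$ with $\varphi\in[-\pi/2,\pi/2]$. Then $dt/\sqrt{P}=d\varphi/\sqrt{H^2+1}$, and the holonomy becomes
\[
\Delta_{H,\tau}(a)=\frac{H}{\tau\sqrt{H^2+1}}\int_{-\pi/2}^{\pi/2}\big(m-a+\rho\sin\varphi\big)\,R_\tau(m+\rho\sin\varphi)\,d\varphi .
\]
This is a smooth integral with no endpoint singularities, which is what makes uniformity tractable.

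Next, Taylor expand $R_\tau(m+\rho\sin\varphi)=R_\tau(m)+R_\tau'(m)\rho\sin\varphi+O(\rho^2)$, with the $O$ uniform because $m\pm\rho$ stays in a compact subinterval of $(0,1)$ for large $H$. Three contributions survive. The term $\rho\sin\varphi\,R_\tau(m)$ integrates to zero. The term $(m-a)R_\tau(m)\pi$ contributes, using $m-a=\frac{1-2a}{2(H^2+1)}$, the quantity $\frac{\pi(1-2a)}{2H^2}R_\tau(a)$. The term $\rho^2R_\tau'(m)\sin^2\varphi$ contributes $\frac{\pi}{2}\frac{f(a)}{H^2}R_\tau'(a)$. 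All remaining terms are $O(H^{-3})$ uniformly. Hence
\[
H^2\Delta_{H,\tau}(a)\to\frac{\pi}{2\tau}\big[f'(a)R_\tau(a)+f(a)R_\tau'(a)\big]=\frac{\pi}{2\tau}\frac{d}{da}\big(f R_\tau\big)(a),
\]
and $f R_\tau=\sqrt{t}\sqrt{D_\tau}=\sqrt{tD_\tau(t)}$, which gives \eqref{eq:limit}. Differentiating yields the numerator $D_\tau+a(\tau^2-1)=1+2(\tau^2-1)a$, which vanishes exactly at $a_c$; $a_c\in(\frac12,1)$ holds precisely because $\tau^2<\frac12$.

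For the amplitude, apply the same substitution on $[t_1,a]$, where $\varphi$ runs from $-\pi/2$ to $\arcsin((a-m)/\rho)=O(1/H)$. The integrand $|m-a+\rho\sin\varphi|R_\tau$ is $O(\rho)=O(1/H)$, so $A_{H,\tau}(a)=O(1/H)$ uniformly on $K$.

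Finally, take $K=[a_-,a_+]$. By uniform convergence, for $H\ge H_0$ we have $H^2\Delta_{H,\tau}(a_-)>0$ and $H^2\Delta_{H,\tau}(a_+)<0$. The holonomy is continuous in $a$: in the $\varphi$-form the integrand depends continuously on $(a,\varphi)$, since $t_1,t_2$ are simple roots varying continuously. The intermediate value theorem then gives $a_H\in(a_-,a_+)$ with zero holonomy, and $A_{H,\tau}(a_H)\le\sup_K A_{H,\tau}=O(1/H)\to0$.

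The main obstacle is keeping the second-order expansion honest. The leading $O(1/H)$ terms cancel, so I must verify that both $m-a$ and $\rho^2$ are captured exactly to order $H^{-2}$, including the $H/\sqrt{H^2+1}$ prefactor, and that the Taylor remainder is bounded uniformly in $a\in K$.
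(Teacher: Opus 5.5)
Your proposal is correct and follows essentially the same route as the paper. Both arguments use an exact affine rescaling of $[t_1,t_2]$ to a fixed interval: your $t=m+\rho\sin\varphi$ is the paper's $x=x_c+Ly$ with $y=\sin\varphi$, which absorbs the weight $(1-y^2)^{-1/2}$. Both then take a second-order Taylor expansion of $R_\tau$ giving $\frac{\pi}{2\tau}\big[f'(a)R_\tau(a)+f(a)R_\tau'(a)\big]H^{-2}+O(H^{-3})$, prove the same $O(1/H)$ amplitude bound, and finish with continuity and the intermediate value theorem on $[a_-,a_+]$.

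The only slip is the sign in $P_{H,a}(t)=-(H^2+1)(t-t_1)(t_2-t)$, which should read $+(H^2+1)(t-t_1)(t_2-t)$. Your later formula $dt/\sqrt{P}=d\varphi/\sqrt{H^2+1}$ already uses the correct sign.
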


\begin{proof}
Put $\varepsilon=1/H$ and fix a compact subinterval $K\subset(0,1)$. We prove the proposition in three steps.

\medskip

\noindent\textbf{Step 1.} In this step, we regularize the holonomy integral by an exact change of variables.

Substituting $t=a+\varepsilon x$ into the quadratic $f$ gives the exact identity
\[
P_{H,a}(a+\varepsilon x)=f(a)+\varepsilon f'(a)x-(1+\varepsilon^2)x^2 .
\]
Write $B=1+\varepsilon^2$ and
\[
x_c=\frac{\varepsilon f'(a)}{2B},
\qquad
L^2=\frac{f(a)}{B}+\frac{\varepsilon^2f'(a)^2}{4B^2},
\]
so that completing the square yields $P_{H,a}(a+\varepsilon x)=B\big[L^2-(x-x_c)^2\big]$, with roots at $x=x_c\pm L$. On $K$, $L\to\sqrt{f(a)}>0$ and $x_c\to 0$ uniformly as $\varepsilon\to0$, so $t_{1,2}=a+\varepsilon(x_c\mp L)$, consistent with Proposition~\ref{prop:profile-system}$(4)$.

Using the fixed-endpoint substitution $x=x_c+Ly$ with $y\in[-1,1]$, so that $dt=\varepsilon L\,dy$ and $\sqrt{P_{H,a}}=\sqrt{B}\,L\sqrt{1-y^2}$, the holonomy \eqref{eq:holonomy} becomes the exact regularized integral
\begin{equation}\label{eq:regularized}
\Delta_{H,\tau}(a)=\frac{\varepsilon}{\tau\sqrt{B}}\int_{-1}^{1}
\frac{\big(x_c+Ly\big)\,R_\tau\big(a+\varepsilon(x_c+Ly)\big)}{\sqrt{1-y^2}}\;dy .
\end{equation}

\medskip

\noindent\textbf{Step 2.} In this step, we prove the limit \eqref{eq:limit} and the amplitude bound.

All arguments of $R_\tau$ in \eqref{eq:regularized} remain in a fixed compact subinterval of $(0,1)$, uniformly for $a\in K$ and small $\varepsilon$, so $R_\tau$ and its first two derivatives are uniformly bounded there. Taylor's theorem gives, uniformly in $a$ and $y$,
\[
R_\tau(a+\varepsilon x)=R_\tau(a)+\varepsilon x\,R_\tau'(a)+\varepsilon^2x^2E(a,\varepsilon,x)
\]
with $E$ uniformly bounded, and $x=x_c+Ly$ is uniformly bounded. Using
\[
\int_{-1}^{1}\frac{dy}{\sqrt{1-y^2}}=\pi,\qquad
\int_{-1}^{1}\frac{y\,dy}{\sqrt{1-y^2}}=0,\qquad
\int_{-1}^{1}\frac{y^2\,dy}{\sqrt{1-y^2}}=\frac{\pi}{2},
\]
we obtain
\[
\int_{-1}^{1}\frac{x\,R_\tau(a+\varepsilon x)}{\sqrt{1-y^2}}\,dy
=\pi x_cR_\tau(a)+\varepsilon\pi\Big[x_c^2+\frac{L^2}{2}\Big]R_\tau'(a)+O(\varepsilon^2),
\]
uniformly on $K$. Substituting $x_c=\varepsilon f'(a)/(2B)=O(\varepsilon)$, $x_c^2=O(\varepsilon^2)$, and $L^2=f(a)/B+O(\varepsilon^2)$ into \eqref{eq:regularized} gives
\[
\Delta_{H,\tau}(a)=\frac{\varepsilon^2\pi}{2\tau}\Big[f'(a)R_\tau(a)+f(a)R_\tau'(a)\Big]+O(\varepsilon^3),
\]
uniformly on $K$. Since
\[
f(a)R_\tau(a)=\frac{a(1-a)\sqrt{D_\tau(a)}}{\sqrt{a}\,(1-a)}=\sqrt{a\,D_\tau(a)},
\]
the bracket equals $\frac{d}{da}\sqrt{aD_\tau(a)}$, which proves \eqref{eq:limit}; differentiating $aD_\tau(a)=a+(\tau^2-1)a^2$ gives the displayed quotient.

For the amplitude, the same substitution applies to \eqref{eq:amplitude}, with the upper endpoint $t=a$ corresponding to $y_0=-x_c/L$:
\[
A_{H,\tau}(a)=\frac{\varepsilon}{2\tau\sqrt{B}}\int_{-1}^{y_0}
\frac{-\big(x_c+Ly\big)\,R_\tau\big(a+\varepsilon(x_c+Ly)\big)}{\sqrt{1-y^2}}\;dy .
\]
The factors $x_c+Ly$ and $R_\tau$ are uniformly bounded on $K$, and $\int_{-1}^{1}(1-y^2)^{-1/2}dy=\pi$, so $0\leq A_{H,\tau}(a)\leq C_K/H$ for a constant $C_K$ independent of $H$ and $a$.

\medskip

\noindent\textbf{Step 3.} We conclude the proof in this step.

Since $0<\tau^2<\tfrac12$, we have $1-\tau^2\in(\tfrac12,1)$ and hence $a_c=\frac{1}{2(1-\tau^2)}\in(\tfrac12,1)$. The sign of the limit function in \eqref{eq:limit} is the sign of the affine function $1-2(1-\tau^2)a$, which is positive for $a<a_c$ and negative for $a>a_c$. Fix $\tfrac12<a_-<a_c<a_+<1$ and take $K=[a_-,a_+]$. Uniform convergence on $K$ gives $H_0$ such that $\Delta_{H,\tau}(a_-)>0$ and $\Delta_{H,\tau}(a_+)<0$ for every $H\geq H_0$. Formula \eqref{eq:regularized} has fixed endpoints and an integrable dominating multiple of $(1-y^2)^{-1/2}$, so $a\mapsto\Delta_{H,\tau}(a)$ is continuous on $K$ by dominated convergence. The intermediate value theorem supplies $a_H\in(a_-,a_+)$ with $\Delta_{H,\tau}(a_H)=0$, and by \textbf{Step~2}, $A_{H,\tau}(a_H)\leq C_K/H\to 0$.
\end{proof}

\section{Proof of Theorem~\ref{thm:main-intro}}\label{sec:main-proof}

In this section, we combine the profile system, the lens lemma, and the holonomy asymptotics and prove Theorem~\ref{thm:main-intro}.

\begin{thm}[Normalized main theorem]\label{thm:main-normalized}
Let $\tau\in(0,\tfrac{1}{\sqrt2})$. Then there exists $H_0>0$ such that for every $H\geq H_0$, the normalized Berger sphere $(\mathbb{S}^3,g_\tau)$ contains a smooth closed embedded rotational torus of constant mean curvature $H$ contained in the open hemisphere $\{\operatorname{Re}z>0\}$.
\end{thm}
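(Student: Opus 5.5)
The proof will be a direct combination of Proposition~\ref{prop:asymptotic} and the lens lemma, Proposition~\ref{prop:lens}. Fix $\tau\in(0,\tfrac{1}{\sqrt2})$ and put $a_c=\frac{1}{2(1-\tau^2)}\in(\tfrac12,1)$. Choose any $a_-,a_+$ with $\tfrac12<a_-<a_c<a_+<1$, for instance the midpoints $a_\pm$ of $(\tfrac12,a_c)$ and $(a_c,1)$, and set $K=[a_-,a_+]$.

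Proposition~\ref{prop:asymptotic} provides $H_0'>0$ such that for every $H\geq H_0'$ there is $a_H\in(a_-,a_+)$ with $\Delta_{H,\tau}(a_H)=0$. The same proposition gives the uniform amplitude bound $0\le A_{H,\tau}(a)\le C_K/H$ for all $a\in K$, from Step~2 of its proof. Define
\[
H_0=\max\{H_0',\,4C_K/\pi,\,1\}.
\]
Then for every $H\geq H_0$ we have $A_{H,\tau}(a_H)\le C_K/H\le \pi/4<\pi/2$. The bound must be \emph{uniform} over $a\in K$, not merely a limit along the sequence $a_H$. That is exactly why I would invoke the explicit constant $C_K$ rather than the qualitative statement $A_{H,\tau}(a_H)\to0$. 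This is the only point needing care, so that a single threshold $H_0$ works for every $H\ge H_0$ and not just for a sequence of values.

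Now apply Proposition~\ref{prop:lens} with this $\tau$, with $H\geq H_0>0$, and with $a=a_H\in(0,1)$. Both hypotheses hold, namely $\Delta\alpha=\Delta_{H,\tau}(a_H)=0$ and $A=A_{H,\tau}(a_H)<\pi/2$. Taking the solution $\gamma$ with initial data $r=\arcsin\sqrt{t_1}$, $\alpha=0$, $\theta=-\pi/2$, the lemma yields a smooth closed embedded torus $\Sigma$ of constant mean curvature $H$ in $(\mathbb{S}^3,g_\tau)$. It is rotational by construction, being the $\Phi_\theta$-orbit of the profile, and it lies in $\{\operatorname{Re}z>0\}=\mathcal{H}_{(1,0)}$ with margin $\sqrt{1-t_2}\cos A>0$. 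The mean curvature is taken with respect to the compatible normal of Proposition~\ref{prop:profile-system}$(2)$, and $H>0$ as required.

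I expect no step to be a real obstacle. All the analytic content sits in Propositions~\ref{prop:asymptotic} and~\ref{prop:lens}. The only subtlety is the quantifier bookkeeping above: $H_0$ must be chosen so that both the sign change of $\Delta_{H,\tau}$ at $a_\pm$ and the amplitude bound hold simultaneously for all $H\ge H_0$. Theorem~\ref{thm:main-intro} would then follow from this normalized statement through Lemma~\ref{lem:scaling}, which I will not carry out here.
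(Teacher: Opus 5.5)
Your proposal is correct and is essentially the paper's proof: fix $a_\pm$ on either side of $a_c$, take the zero-holonomy parameter $a_H$ from Proposition~\ref{prop:asymptotic}, enlarge $H_0$ using the uniform bound $A_{H,\tau}(a)\le C_K/H$ on $K$ so that $A_{H,\tau}(a_H)<\pi/2$, and apply Proposition~\ref{prop:lens}. The one tacit point, which the paper also leaves implicit, is that the bound $C_K/H$ only holds once $H$ is large enough for the roots $t_{1,2}$ to stay in a fixed compact subinterval of $(0,1)$; taking the maximum with $H_0'$ absorbs this, after enlarging $H_0'$ if necessary.
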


\begin{proof}
Fix $a_\pm$ with $\tfrac12<a_-<a_c<a_+<1$, where $a_c=\frac{1}{2(1-\tau^2)}$ as in Proposition~\ref{prop:asymptotic}. Let $H_0$ be as in Proposition~\ref{prop:asymptotic}, enlarged so that $A_{H,\tau}(a_H)<\tfrac{\pi}{2}$ for all $H\geq H_0$; this is possible by the uniform bound $A_{H,\tau}(a_H)\leq C_K/H$.

Fix $H\geq H_0$ and put $a=a_H$. The holonomy and amplitude of the energy level $\{\mathcal E=-aH\}$ are $\Delta_{H,\tau}(a_H)=0$ and $A_{H,\tau}(a_H)<\tfrac{\pi}{2}$, so Proposition~\ref{prop:lens}, applied with this $a$, produces a smooth closed embedded rotational torus of constant mean curvature $H$ in $(\mathbb{S}^3,g_\tau)$ contained in $\{\operatorname{Re}z>0\}$.
\end{proof}

\begin{proof}[Proof of Theorem~\ref{thm:main-intro}]
Since the metric \eqref{eq:berger-metric} depends only on $\tau^2$, we may assume $\tau>0$. Put $\lambda=2/\sqrt{\kappa}$ and $\tau'=\lambda\tau$. By hypothesis $\kappa>8\tau^2$, so $\tau'\in(0,\tfrac{1}{\sqrt2})$ by Lemma~\ref{lem:scaling}. Let $H_0=H_0(\tau')$ be as in Theorem~\ref{thm:main-normalized} and set $H_*=H_0/\lambda=\tfrac{\sqrt{\kappa}}{2}H_0$.

Let $H\geq H_*$ and put $H'=\lambda H\geq H_0$. Theorem~\ref{thm:main-normalized} provides a smooth closed embedded rotational torus $\Sigma\subset\mathbb{S}^3$ of constant mean curvature $H'$ with respect to $g_{\tau'}$, contained in $\{\operatorname{Re}z>0\}$. By Lemma~\ref{lem:scaling}, $g_{\tau'}=g_{4,\tau'}=\lambda^{-2}g_{\kappa,\tau}$, and the same oriented surface $\Sigma$ has constant mean curvature $H'/\lambda=H$ with respect to $g_{\kappa,\tau}$. Rescaling the ambient metric by a constant does not change the underlying subset of $\mathbb{S}^3$, its smoothness, compactness, embeddedness, genus, or rotational invariance, and the hemisphere condition of Definition~\ref{defn:hemisphere} is metric-independent. Thus $\Sigma$ is a smooth closed embedded rotational torus of constant mean curvature $H$ in $\mathbb{E}^3(\kappa,\tau)$ contained in an open hemisphere.

For the final assertion, $\Sigma$ has genus one, so it is not a sphere; in particular it is a compact embedded CMC surface in an open hemisphere of $\mathbb{E}^3(\kappa,\tau)$ which is not a canonical rotational CMC sphere.
\end{proof}

\section{An explicit torus: \texorpdfstring{$\tau=\frac12$, $H=20$}{tau=1/2, H=20}}\label{sec:explicit}

In this section, we prove Theorem~\ref{thm:explicit-intro} by certifying one zero-holonomy parameter at $\tau=\tfrac12$ and $H=20$ with exact rational arithmetic.

Throughout this section we fix
\[
\tau=\tfrac12,\qquad H=20,
\]
so that $R(t):=R_{1/2}(t)=\dfrac{\sqrt{1-\tfrac34 t}}{\sqrt{t}\,(1-t)}$, and for $a\in[\tfrac12,\tfrac45]$ we write
\[
P_a(t)=t(1-t)-400(t-a)^2,
\]
\[
\Delta(a)=2\int_{t_1(a)}^{t_2(a)}\frac{20(t-a)R(t)}{\sqrt{P_a(t)}}\,dt,
\qquad
A(a)=-\int_{t_1(a)}^{a}\frac{20(t-a)R(t)}{\sqrt{P_a(t)}}\,dt,
\]
where $t_1(a)<t_2(a)$ are the roots of $P_a$. These agree with \eqref{eq:holonomy} and \eqref{eq:amplitude}, since $1/\tau=2$ and $1/(2\tau)=1$.

\begin{thm}[Certified zero holonomy]\label{thm:certificate}
There exists $a_*\in(\tfrac12,\tfrac45)$ such that
\[
\Delta(a_*)=0
\qquad\text{and}\qquad
A(a_*)<\frac{7000}{20451}\,\pi<\frac{\pi}{2}.
\]
\end{thm}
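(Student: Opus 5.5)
The plan is to certify a sign change of $\Delta$ on $[\tfrac12,\tfrac45]$ and then use the intermediate value theorem. Continuity of $\Delta$ in $a$ follows exactly as in Step~3 of the proof of Proposition~\ref{prop:asymptotic}, from the regularized form \eqref{eq:regularized}. With $\varepsilon=\tfrac1{20}$ and $\tau=\tfrac12$, the limit formula predicts $a_c=\frac{1}{2(1-1/4)}=\tfrac23$. I would therefore aim to prove
\[
\Delta(\tfrac12)>0\quad\text{and}\quad\Delta(\tfrac45)<0,
\]
or replace these endpoints by closer rational points if the margins turn out too thin.

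At each endpoint $a$, I would work in the variable $y\in[-1,1]$ of \eqref{eq:regularized}. Here $x_c=\varepsilon f'(a)/(2B)$, $L^2=f(a)/B+\varepsilon^2f'(a)^2/(4B^2)$ and $B=1+\tfrac1{400}$ are explicit algebraic numbers. I would enclose $L$ between nearby rationals. Then I would Taylor-expand $R(a+\varepsilon(x_c+Ly))$ in $\varepsilon$ to second order with an explicit remainder bound.

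The Chebyshev moments $\int y^k(1-y^2)^{-1/2}\,dy$ are $\pi,0,\pi/2,0,3\pi/8$. Using them, $\Delta(a)$ equals $\pi$ times a rational main term plus an error controlled by $\sup|R'''|$ on $[t_1,t_2]$. This sup can be bounded by hand, since $t_{1,2}$ lie within about $\varepsilon L\approx 0.025$ of $a$.

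The main term is of size $\varepsilon^2\frac{\pi}{2\tau}\frac{d}{da}\sqrt{aD(a)}$. At $a=\tfrac12$ this is positive, about $\tfrac{\pi}{400}\cdot 0.28$. At $a=\tfrac45$ it is negative. The third-order error is $O(\varepsilon^4)$, so with $\varepsilon=1/20$ the signs should survive. Square roots such as $\sqrt{D}$ and $\sqrt a$ would be replaced by rational upper and lower bounds to keep the arithmetic exact.

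For the amplitude bound, I would start from the Step~2 integral for $A$. I would bound $|x_c+Ly|\le |x_c|+L$ on $[-1,y_0]$ and bound $R$ by its maximum on $[t_1(a),a]$, which is attained at an endpoint by monotonicity considerations. Then
\[
\int_{-1}^{y_0}(1-y^2)^{-1/2}\,dy\le\pi.
\]
Since $a_*$ is unknown, this must hold uniformly for $a\in[\tfrac12,\tfrac45]$. So I would maximize the rational bound over that interval: take $\sup f(a)$, with $L\lesssim\sqrt{1/4}$, and $\sup R$ near $t\le \tfrac45$. This should give a bound of the form $\frac{C}{20\sqrt B}\pi$ with rational $C$, which I would massage into $\frac{7000}{20451}\pi$. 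The factor $20451$ presumably encodes the rational enclosures used for $\sqrt{B}$ or $L$.

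The hardest step is the rigorous sign certification at the endpoints, because the main term is only $O(10^{-3})$. The remainder must be bounded with honest constants, uniformly in $y$ including near $y=\pm1$, where the argument of $R$ reaches $t_{1,2}$. If the second-order expansion is not sharp enough, I would first try splitting $[\tfrac12,\tfrac45]$ and testing closer to $a_c=\tfrac23$. Failing that, I would use a higher-order expansion, since the odd moments vanish.
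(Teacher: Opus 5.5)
Your plan is correct and follows the paper's proof. It uses the same regularizing substitution (your $y$ is the paper's $-\cos\vartheta$, with $t=c-d\cos\vartheta$) and the same IVT sign change between $a=\tfrac12$ and $a=\tfrac45$. At $a=\tfrac45$ it uses a second-order Taylor expansion with a $\sup|R'''|$ remainder and Chebyshev moments; the paper expands about the root midpoint $c=\tfrac{641}{802}$ rather than about $a$, so the odd moments vanish and the $R''$ term can be discarded by sign. The amplitude is handled by a crude uniform bound, and yours (via $a-t\le a-t_1$ and $R\le R(\tfrac45)$) is actually sharper than the paper's $t_2-t_1<\tfrac{21}{401}$, $R<\tfrac{1000}{153}$, which is where $20451=401\cdot 51$ comes from. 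The one simplification you miss is at $a=\tfrac12$: there $x_c=0$, so pairing $y$ with $-y$ reduces positivity to the exact fact that $R$ is increasing on the root band, and no Taylor estimate is needed at that endpoint.
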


\begin{proof}
We prove the theorem in five steps.

\medskip

\noindent\textbf{Step 1.} In this step, we locate the root band.

Expanding gives $P_a(t)=-401t^2+(1+800a)t-400a^2$, with discriminant $1+1600a(1-a)>0$ on $[\tfrac12,\tfrac45]$. The two roots are $c(a)\pm d(a)$ with
\[
c(a)=\frac{1+800a}{802},\qquad d(a)=\frac{\sqrt{1+1600a(1-a)}}{802}.
\]
For $a\in[\tfrac12,\tfrac45]$ we have $c(a)\geq\tfrac12$, $c(a)<\tfrac45$, and $d(a)<\tfrac{21}{802}$, because $1+1600a(1-a)\leq 401<21^2$. Moreover $\tfrac12-\tfrac{21}{802}=\tfrac{190}{401}>\tfrac{9}{20}$ and $\tfrac45+\tfrac{21}{802}<\tfrac45+\tfrac{3}{100}=\tfrac{83}{100}$, so
\begin{equation}\label{eq:root-band}
\tfrac{9}{20}<c(a)-d(a)<c(a)+d(a)<\tfrac{83}{100}
\qquad\text{for all }a\in[\tfrac12,\tfrac45].
\end{equation}
All denominators appearing below are therefore uniformly bounded away from zero.

\medskip

\noindent\textbf{Step 2.} In this step, we remove the endpoint singularities and reduce to a continuous integral.

Substituting $t=c(a)-d(a)\cos\vartheta$ with $\vartheta\in[0,\pi]$ and using $P_a(t)=401\,(t-t_1)(t_2-t)$, the square root endpoint factors cancel, and
\[
\Delta(a)=\frac{40}{\sqrt{401}}\,J(a),
\qquad
J(a)=\int_0^\pi\big(t-a\big)\,W(t)\,d\vartheta,
\qquad
W(t)=\frac{\sqrt{1-\tfrac34t}}{\sqrt{t}\,(1-t)} .
\]
By \eqref{eq:root-band}, the integrand of $J$ is jointly continuous on the compact rectangle $[\tfrac12,\tfrac45]\times[0,\pi]$, so $J$ is continuous on $[\tfrac12,\tfrac45]$, and $\Delta(a)$ and $J(a)$ have the same sign.

\medskip

\noindent\textbf{Step 3.} In this step, we prove $J(\tfrac12)>0$.

At $a=\tfrac12$ we have $c(a)=a$. Pairing $\vartheta$ with $\pi-\vartheta$ over $[0,\tfrac{\pi}{2}]$, the paired contribution at $x=d\cos\vartheta>0$ is
\[
x\big[W(a+x)-W(a-x)\big],
\]
which is positive provided $W$ is strictly increasing on the root interval. Logarithmic differentiation gives
\[
\frac{W'(t)}{W(t)}=\frac{1}{1-t}-\frac{1}{2t}-\frac{3}{8\big(1-\tfrac34t\big)} ,
\]
and after multiplication by the positive quantity $8t(1-t)(1-\tfrac34t)$ the numerator becomes $2(-2+6t-3t^2)$. The quadratic $-2+6t-3t^2$ is increasing on $t<1$ and its value at $t=\tfrac{9}{20}$ is $\tfrac{37}{400}>0$, so it is positive on the whole band \eqref{eq:root-band}. Thus $W$ is strictly increasing there, every paired contribution except the midpoint is positive, and $J(\tfrac12)>0$.

\medskip

\noindent\textbf{Step 4.} In this step, we prove $J(\tfrac45)<0$ by exact rational estimates.

Set $a=\tfrac45$, so that
\[
c=\frac{641}{802},\qquad d=\frac{\sqrt{257}}{802},\qquad \delta:=c-a=-\frac{3}{4010}.
\]
Since $d<\tfrac{17}{802}$, the whole $t$-interval lies in $(\tfrac34,\tfrac{83}{100})$: we have $c-d>\tfrac{624}{802}>\tfrac34$ and $c+d<\tfrac{658}{802}<\tfrac{83}{100}$.

Write $L=\log W$. Then
\begin{gather*}
L'=\frac{1}{1-t}-\frac{1}{2t}-\frac{3}{8(1-\tfrac34t)},\qquad
L''=\frac{1}{(1-t)^2}+\frac{1}{2t^2}-\frac{9}{32(1-\tfrac34t)^2},\\
L'''=\frac{2}{(1-t)^3}-\frac{1}{t^3}-\frac{27}{64(1-\tfrac34t)^3}.
\end{gather*}
On $(\tfrac34,\tfrac{83}{100})$ we have $t>\tfrac34$, $\sqrt{t}>\tfrac45$, $1-t>\tfrac{17}{100}$, and $1-\tfrac34t>\tfrac38$, so $W<\tfrac{125}{17}<\tfrac{15}{2}$ and
\[
|L'|<8,\qquad |L''|<38,\qquad |L'''|<418 .
\]
From $W'''=W\big[(L')^3+3L'L''+L'''\big]$ we conclude
\[
|W'''|<\tfrac{15}{2}\,(512+912+418)=13815
\]
on the whole interval.

At the center $c$ we have $1-\tfrac34c=\tfrac{1285}{3208}$ and $1-c=\tfrac{161}{802}$. The square comparisons
\[
\Big(\tfrac{1581}{2500}\Big)^2<\tfrac{1285}{3208},
\qquad
c<\Big(\tfrac{9}{10}\Big)^2
\]
give $\sqrt{1-\tfrac34c}>\tfrac{1581}{2500}$ and $\sqrt{c}<\tfrac{9}{10}$, hence
\[
W(c)>\frac{1581/2500}{(9/10)(161/802)}=\frac{12679620}{3622500}>\frac72,
\]
the last inequality because $2\cdot 12679620=25359240>25357500=7\cdot 3622500$. In the same way,
\[
\tfrac{1285}{3208}<\Big(\tfrac{127}{200}\Big)^2,
\qquad
\Big(\tfrac{89}{100}\Big)^2<c
\]
give
\[
W(c)<\frac{127/200}{(89/100)(161/802)}=\frac{10185400}{2865800}<\frac{18}{5},
\]
because $5\cdot 10185400=50927000<51584400=18\cdot 2865800$. The exact rational value
\[
L'(c)=\frac{453483682}{132613285}<\frac{185}{54}
\]
gives $W'(c)=W(c)L'(c)<\tfrac{18}{5}\cdot\tfrac{185}{54}=\tfrac{37}{3}$. The second derivative $W''(c)=W(c)\big[(L'(c))^2+L''(c)\big]$ is positive, because
\[
L''(c)>\frac{1}{(1-c)^2}-\frac{9}{32(1-\tfrac34c)^2}>16-\frac{225}{128}>0 .
\]

For $y=-d\cos\vartheta$, Taylor's theorem at $c$ gives
\[
W(c+y)=W(c)+W'(c)\,y+\tfrac12W''(c)\,y^2+E(\vartheta),
\qquad
|E(\vartheta)|\leq\frac{13815\,|y|^3}{6} .
\]
Using
\[
\int_0^\pi y\,d\vartheta=0,\quad
\int_0^\pi y^2\,d\vartheta=\frac{\pi d^2}{2},\quad
\int_0^\pi y^3\,d\vartheta=0,\quad
\int_0^\pi |y|^3\,d\vartheta=\frac{4d^3}{3},\quad
\int_0^\pi y^4\,d\vartheta=\frac{3\pi d^4}{8},
\]
and $t-a=\delta+y$, we obtain
\[
J(\tfrac45)\ \leq\ \pi\Big[\delta\,W(c)+\frac{d^2}{2}W'(c)+\frac{\delta d^2}{4}W''(c)\Big]
+\frac{13815}{6}\Big[\frac{4}{3}|\delta|\,d^3+\frac{3\pi}{8}d^4\Big].
\]
The term $\frac{\delta d^2}{4}W''(c)$ is negative, because $\delta<0$ and $W''(c)>0$, and we discard it. Since $d^2=\tfrac{257}{643204}$, the remaining bracket satisfies
\[
\delta\,W(c)+\frac{d^2}{2}W'(c)
<-\frac{21}{8020}+\frac{257}{2\cdot 643204}\cdot\frac{37}{3}
=-\frac{2981}{19296120}<0 .
\]
Using $\pi>3$ in this negative term, $\pi<\tfrac{22}{7}$ in the positive remainder, and $d<\tfrac{17}{802}$, exact rational arithmetic gives
\begin{multline*}
J(\tfrac45)
<3\cdot\Big(-\frac{2981}{19296120}\Big)
+\frac{13815}{6}\left[\frac{4}{3}\cdot\frac{3}{4010}\cdot\Big(\frac{17}{802}\Big)^3
+\frac{3}{8}\cdot\frac{22}{7}\cdot\Big(\frac{257}{643204}\Big)^2\right]\\
=-\frac{967341167}{115839187972480}<0 .
\end{multline*}

By \textbf{Steps~2--4} and the intermediate value theorem, there exists $a_*\in(\tfrac12,\tfrac45)$ with $J(a_*)=0$, hence $\Delta(a_*)=0$.

\medskip

\noindent\textbf{Step 5.} In this step, we bound the amplitude and conclude.

The substitution of \textbf{Step~2}, stopped at the value $\vartheta_*\in(0,\pi)$ where $t=a_*$, gives
\[
A(a_*)=\frac{20}{\sqrt{401}}\int_0^{\vartheta_*}\big(a_*-t\big)W(t)\,d\vartheta .
\]
Here $\tfrac{20}{\sqrt{401}}<1$ and $0\leq a_*-t\leq t_2-t_1=\frac{\sqrt{1+1600a_*(1-a_*)}}{401}<\frac{21}{401}$ on $[0,\vartheta_*]$. On the band \eqref{eq:root-band} the function $\sqrt{t}(1-t)$ is decreasing, because $t>\tfrac13$, so
\[
\sqrt{t}\,(1-t)>\sqrt{\tfrac{83}{100}}\cdot\tfrac{17}{100}>\tfrac{9}{10}\cdot\tfrac{17}{100}=\tfrac{153}{1000},
\]
while $\sqrt{1-\tfrac34t}<1$; hence $W(t)<\tfrac{1000}{153}$ on the band. Therefore
\[
A(a_*)<\pi\cdot\frac{21}{401}\cdot\frac{1000}{153}=\frac{7000}{20451}\,\pi<\frac{\pi}{2},
\]
because $2\cdot 7000=14000<20451$.
\end{proof}

\begin{proof}[Proof of Theorem~\ref{thm:explicit-intro}]
The displayed metric is $g_{1/2}$ in the notation \eqref{eq:normalized-metric}, since $\tau^2-1=-\tfrac34$ for $\tau=\tfrac12$; by Lemma~\ref{lem:scaling} this is the Berger sphere $\mathbb{E}^3(4,\tfrac12)$, and it is non-round since $4\neq 4\tau^2=1$.

Let $a_*\in(\tfrac12,\tfrac45)$ be as in Theorem~\ref{thm:certificate}. The holonomy and amplitude of the energy level $\{\mathcal E=-20a_*\}$ are the quantities $\Delta(a_*)=0$ and $A(a_*)<\tfrac{\pi}{2}$ of Theorem~\ref{thm:certificate}. Proposition~\ref{prop:lens}, applied with $\tau=\tfrac12$, $H=20$, and $a=a_*$, yields a smooth closed embedded rotational torus of constant mean curvature $20$ in $\mathbb{E}^3(4,\tfrac12)$, contained in the open hemisphere $\{\operatorname{Re}z>0\}$, with margin $\operatorname{Re}z\geq\sqrt{1-t_2}\cos A(a_*)>0$ at every point.
\end{proof}

\section{Concluding remarks}\label{sec:remarks}

The results proved above settle the Berger case of the conjecture of \cite[Section~4.5]{FM10} in the range $\kappa>8\tau^2$. In this final section, we collect three remarks on the relation of the construction to the existing literature.

\begin{rem}[Nodoids can close up embeddedly]\label{rem:nodoid-correction}
Along the tori of Theorems~\ref{thm:main-normalized} and~\ref{thm:certificate}, the conserved quantity is $e=-aH\in(-H,0)$, so the angular velocity $d\alpha/dt$ changes sign along the profile by Proposition~\ref{prop:profile-system}$(4)$. In the classifications of rotational CMC and prescribed mean curvature surfaces in Berger spheres, this sign change is the defining feature of the \emph{nodoid} case; see \cite[Section~3]{Tor10} and \cite[Theorem~1.2]{Bue22}. In the normalization $\kappa=4$, the period $T(H,E)$ of \cite{Tor10} equals our $\Delta\alpha$, with $E=e$, so $T=0$ is formally included in the compactness condition that $T$ be a rational multiple of $\pi$. The embedded compact examples identified in \cite[Section~3]{Tor10} arise in the unduloid case with $T=2\pi/k$ for a positive integer $k$; its nodoid discussion says only that the resulting surface is immersed and does not identify an embedded zero-period subcase. Similarly, item~(4) of \cite[Theorem~1.2]{Bue22} describes rotational nodoids as properly immersed with self-intersections, while the embeddedness criterion of \cite[Section~4.1]{Bue22} concerns the nonzero vertical periods $T=8\tau\pi/(p\kappa)$ with $p$ a nonzero integer. The tori constructed here have $T=\Delta\alpha=0$; at this parameter the profile is a simple lens rather than a chain of loops, and embeddedness follows from Proposition~\ref{prop:lens}. Thus the zero-period case is an embedded exception to the usual self-intersecting nodoid picture.
\end{rem}

\begin{rem}[Isoperimetry and stability]\label{rem:isoperimetric}
The isoperimetric problem in the Berger spheres $\mathbb{E}^3(\kappa,\tau)$ with $\tfrac13\leq\tfrac{4\tau^2}{\kappa}<1$ is solved by Torralbo and Urbano: the solutions are the canonical rotational CMC spheres \cite{TU12}; see also \cite[Section~4.4]{FM10}. The parameter ranges overlap: for $\tfrac13\leq\tfrac{4\tau^2}{\kappa}<\tfrac12$, Theorem~\ref{thm:main-intro} produces embedded CMC tori in hemispheres of Berger spheres in which the isoperimetric regions are all bounded by spheres. The two statements do not conflict: the tori of Theorem~\ref{thm:main-intro} do not bound isoperimetric regions.
\end{rem}

\begin{rem}[The remaining spaces]\label{rem:remaining}
The construction of this paper is rotational, and its reach is limited by the geometry of the rotational families. In $\mathrm{Nil}_3$ and in the universal cover of $\mathrm{PSL}_2(\mathbb{R})$, compact rotational CMC tori do not exist, because rotational CMC tori in $\mathbb{E}^3(\kappa,\tau)$ exist only when $\kappa-4\tau^2>0$ (see \cite[Section~4.1]{FM10}); the corresponding cases of the conjecture of \cite[Section~4.5]{FM10}, as well as the Berger ranges not covered by Theorem~\ref{thm:main-intro}, remain open; see Question~\ref{ques:all-berger}.
\end{rem}

\end{document}